\newtheorem{axiom}{Assumption}[section]
\newtheorem{lemma}{Lemma}[section]
\newtheorem{prop}{Proposition}[section] 
\newtheorem{defn}{Definition}[section]
\newtheorem{apen}{Appendix}[section]
\theoremstyle{definition}
\newtheorem{exam}{Example}[section]
 \author{Mário M. Gra\c{c}a \\
 \\
\small  Department of Mathematics, Instituto Superior Técnico,\\
\small Technical University of Lisbon\\
\small  Av. Rovisco Pais, 1049\--001, Lisboa, Portugal\\
\small  \url{mgraca@math.ist.utl.pt}}
\begin{document}
 
\title{A SIMPLE DERIVATION OF NEWTON\--COTES FORMULAS WITH REALISTIC ERRORS}
\maketitle

\noindent

\begin{abstract}

\noindent

\noindent
In order to approximate the integral $I(f)=\int_a^b f(x) dx$, where $f$ is a sufficiently smooth function, models for  quadrature rules are developed using a given {\it panel} of  $n\,\,(n\geq 2)$ equally spaced  points.   These models arise from the undetermined coefficients method, using a Newton's basis for polynomials. Although part of the final product is algebraically  equivalent to the well known closed Newton\--Cotes rules, the algorithms  obtained are not the classical ones. 

\noindent
  In the basic model  the most simple quadrature rule $Q_n$ is adopted (the so\--called left rectangle rule) and a correction  $\tilde E_n$ is constructed, so that the final rule  $S_n=Q_n+\tilde E_n$ is interpolatory.  The correction $\tilde E_n$, depending on the divided differences of the  data,  might be considered a {\em realistic correction} for $Q_n$, in the sense  that $\tilde E_n$ should be close to the magnitude of the true error of $Q_n$, having also the correct sign. The analysis of the theoretical error of the rule $S_n$  as well as some classical properties for divided differences  suggest  the inclusion of one or two new points in the given panel.  When $n$ is even it is included one point and two points otherwise.  In both cases this approach enables the computation of a {\em realistic error} $\bar E_{S_n}$ for the  {\it extended or corrected} rule $S_n$. The respective output $(Q_n,\tilde E_n, S_n, \bar E_{S_n})$ contains reliable information on the quality of the approximations $Q_n$ and $S_n$, provided certain conditions involving ratios for the derivatives of the function $f$ are fulfilled.  These simple rules are easily converted into {\it composite} ones.  Numerical examples are presented showing that these quadrature rules are useful as a computational alternative to the classical Newton\--Cotes formulas.

   \end{abstract}

\medskip
{\it Keywords}:  Divided differences; undetermined coefficients method;  rea\-listic er\-ror;  Newton\--Cotes rules.

\medskip
\noindent
{\it AMS Subject Classification}: 65D30, 65D32, 65-05, 41A55.

\section{Introduction}\label{introd}
 The first  two  quadrature  rules taught  in any numerical analysis course  belong to a group  known as closed Newton\--Cotes rules.  They are  used to approximate the integral   $I(f)=\int_a^b f(x) dx$  of a sufficiently smooth  function $f$ in the finite interval $[a,b]$.  The basic rules are known as   trapezoidal rule and  the Simpson's rule.  The trapezoidal rule is   $Q(f)=h/2\, (f(a)+f(b))$,   for which  $h=b-a$,  and has  the theoretical  error
 \begin{equation}\label{nova1}
\begin{array}{l}
I(f)-Q(f)= - \displaystyle{\frac{h^3}{12}}\, f^{(2)}(\xi), 
\end{array}
\end{equation}
while  the Simpson's rule is  $Q(f)=h/3\, (f(a)+4\, f((a+b)/2)+ f(b))$, with  $h=(b-a)/2$,  and  its error   is
\begin{equation}\label{nova1a}
\begin{array}{l}
I(f)-Q(f)=- \displaystyle{\frac{h^5}{90}} \, f^{(4)}(\xi). 
\end{array}
\end{equation}
The error formulas  \eqref{nova1} and \eqref{nova1a} are of existential type.  In fact, assuming that   $f^{(2)}$ and $f^{(4)}$  are (respectively)  continuous,  the expressions  \eqref{nova1} and \eqref{nova1a}  say that there exist a point $\xi$,  somewhere in the interval $(a,b)$, for which the respective  error has the displayed form.  From a computational point of view the utility of  these error expressions is rather limited since in general  is  quite difficult or even impossible to obtain expressions for the derivatives  $f^{(2)}$ or $f^{(4)}$, and consequently bounds for  $|I(f)-Q(f)|_\infty$. Even in the case one obtains such bounds  they  generally overestimate the true error of $Q(f)$.

\noindent
Under mild assumption on the smoothness of the integrand function $f$, our aim is to determine certain quadrature rules, say   $R(f)$, as well as approximations for its error  $\tilde E(f)$,  using only the information contained in the table of values or {\it panel} arising from the discretization of the problem. The algorithm to be constructed will produce the numerical value  $R(f)$, the correction or estimated error $\tilde E(f)$ as well as  the value of the {\it interpolatory} rule
\begin{equation}\label{nova2}
S(f)=R(f)+\tilde E(f).
\end{equation} 
The true error of $S(f)$ should be much less than the estimated error of $R(f)$, that is,
\begin{equation}\label{erromenor}
| I(f)-S(f)| << |\tilde E(f)|,
\end{equation}
for a sufficiently small step  $h$.
In such case we say that $\tilde E(f)$ is a {\em realistic correction} for $R(f)$. Unlike the usual approach where one builds  a quadrature formula $ Q (f) $ (like the trapezoidal or Simpson's rule) which is supposed to be a reasonable approximation to the exact value of the integral,  here we do not care wether the approximation $ R (f) $ is eventually  bad, provided that the  correction $ \tilde E (f) $ has been well modeled. In this case the value $ S (f) $ will be a good approximation to the exact value of the integral $ I (f) $. Besides the values $R(f)$, $\tilde E(f)$ and $S(f)$ we are also interested in computing a good estimation $\bar E_{S}(f) $ for the true error of $S(f)$, in the following sense. If the true error $E(f)=I(f)-S(f)$ is expressed in the standard decimal form as $E(f)=\pm 0.d_1d_2\cdots d_m\times 10 ^{-k},\,\, k\geq 0$, the approximation  $\bar E (f)$ is said to be {\em realistic} if its decimal form has the same sign as $E(f)$ and its first digit  in the mantissa differs at most one unit, that is, $\bar E(f)=\pm 0. (d_1\pm 1)\, \cdots\,  \times 10^{-k}$ (the dots represent any decimal digit). Finally, the algorithm to be used will produce the values $(R(f), \tilde E(f), S(f), \bar E(f))$.

\bigbreak
\noindent
In section \ref{subsec1} we present two models for building simple quadrature rules named  model $A$ and model $B$. Although both models are derived from the same method, in this work we focus our attention mainly on the model $A$. Definitions, notations and background material are presented in section \ref {main}. In Proposition \ref{pesosA} we obtain  the {\it weights} for the quadrature rule in model $A$ by  the undetermined coefficients method as well as  the  theoretical error expressions for the rules  are deduced  (see Proposition \ref {pCA}). The main results are discussed in Section \ref{secmodeloA}, namely  in Proposition \ref {prA} we show  that a reliable computation of realistic errors depends on the behavior of a certain  function involving ratios between high order  derivatives of the integrand function $ f $ and its first derivative.

\noindent
Composite rules for model A are presented in Section \ref {seccompositeA}  where some numerical examples  illustrate  how our approach  allows to obtain realistic error's estimates  for these rules.

\bigbreak

\subsection{ Two models}\label{subsec1}

\noindent
In this work we consider to be given a {\it panel} of  $n\,(n\geq 2)$  points $ \{(x_1, f_1), (x_2, f_2), $ $\ldots, (x_n, f_n) \} $, in the interval $ [a, b] $, having the nodes $ x_i $  equally spaced with step $ h> 0 $,  $f_i=f(x_i)$, where $f$  a sufficiently smooth function in the interval.  We consider the following two models:

\bigbreak
\underline{Model A}
\medskip

\noindent
Using only the first node of the panel we construct a quadrature rule  $Q_n(f)$ adding a correction  $\tilde E_n(f)$,  so that the   {\it corrected} or  {\it extended}  rule  $S_n(f)= Q_n(f)+\tilde E_n(f)$ 
is {\em interpolatory} for the whole panel,
\begin{align}\nonumber
S_n(f)&= Q_n(f)+\tilde E_n(f)\\ \label{modeloA}
&= a_1\, f(x_1)+ \left\{ a_2\,f[x_1,x_2]+\cdots+a_n\, f[x_1,x_2,\ldots,x_n]  \right\},
\end{align}
where $f[x_1,x_2,\ldots,x_n]$ denotes the $(n-1)$\--th divided difference  and $a_1$, $a_2$, $\ldots$, $a_n$ are   {\it weights} to be determined.

\noindent
Note that $Q_n(f)$ is simply the so\--called   {\it left rectangle} rule, thus $\tilde E_n(f)=\sum_{j=2}^{n}$ $ a_j\, f[x_1,\ldots,x_j]$  can  be seen  as a correction to such a rule.

\bigbreak
\underline{Model B}
\medskip

\noindent
The rule $Q_n(f)$ uses the first $n-1$ points of the panel (therefore is  not interpolatory in the whole panel),  and it is added  a correction term $\tilde E_n(f)$,  so that the  {\it corrected} or  {\it extended}  rule  $S_n(f)$ is interpolatory,
\begin{align}\nonumber
S_n(f)&= Q_n(f)+\tilde E_n(f)\\ \label{modeloB}
&= \left\{\,a_1\, f(x_1)+ a_2\,f(x_2)+\cdots+a_{n-1}\, f(x_{n-1})\,  \right\}+a_n\,f[x_1,x_2,\ldots,x_n]. 
\end{align}

\noindent
Since the interpolating polynomial of the panel is unique,  the value computed for $S_n(f)$ using either model is the same and equal to the value one finds if the simple  closed Newton\--Cotes rule for $n$ equally spaced points has been applied to the data. This means that the extended rules \eqref{modeloA} and \eqref{modeloB} are both  algebraically equivalent to the referred  simple Newton\--Cotes rules. However, the algorithms associated to each of the models  \eqref{modeloA} and \eqref{modeloB} are not the  classical ones for the referred rules.  In particular,  we can show that that for $n$ odd,  the rules $Q_n(f)$ in model B are open Newton\--Cotes formulas \cite{mg1}. Therefore, the extended  rule $S_n$ in model B can be seen as a bridge between open and closed Newton\--Cotes rules.

\noindent
The  method of undetermined coefficients applied to a Newton's basis of polynomials is used  in order to obtain $S_n(f)$.  The associated system of equations is diagonal,   The same method can also be applied  applied to get any   hybrid model obtained from the models A and B.  For instance,  an hybrid extended rule using  $n=3$ points could be written as
$$
S_3(f)=\left\{  a_1\, f(x_1)+ a_2\, f(x_2)+ a_3\, f(x_3)  \right\} + a_4\, f[x_1,x_2]+ a_5\, f[x_1,x_2,x_3].
$$
In this work our study is mainly  focused in model $A$.

 \section{Notation and background}\label{main}
 
 \begin{defn}\label{defstandard} (Canonical and Newton's basis)

\noindent
Let  ${\cal P}_k$  be the vector space of real polynomials   of degree less or equal to $k$ ($k$ a nonnegative integer).  The set  $\,<\phi_0(x),\phi_1(x),\-\ldots, \phi_{n-1}(x)>$, where
\begin{equation}\label{stand1}
\phi_i(x)= x^i,\quad i=0,1,\ldots, n-1,
\end{equation}
is the canonical basis for  ${\cal P}_{n-1}$.

\noindent
Given  $n\geq 1$ distinct points $x_1,x_2,\ldots x_n$,   the set  $<w_0(x),w_1(x),\ldots, w_{n-1}(x)>$, with
\begin{equation}\label{stand2}
\begin{array}{l}
w_0(x)=1\\
w_i(x)=w_{i-1} (x)\times (x-x_i), \quad i=1,\ldots,(n-1)
\end{array}
\end{equation}
 is known as the Newton's basis for ${\cal P}_{n-1}$.
\end{defn}

\noindent  
 A polynomial interpolatory quadrature rule $R_n(f)$ obtained from a given panel $\{ (x_1,f_1),\- (x_2, f_2),\ldots, (x_n,f_n)\}$, where $x_i\neq x_j$ for $i\neq j$,  has the form 
\begin{equation}\label{R1}
   R_n(f)=c_1\, f(x_1)+c_2\, f(x_2)+\cdots+c_n f(x_n), 
\end{equation}
     where the coefficients  (or weights) $c_j\,\,(1\leq j\leq n)$ can be computed assuming  the quadrature rule is exact for any polynomial $q$ of degree less or equal to $n-1$, that is, $deg(R_n(f))=n-1$, according to the following definition
     
      \begin{defn}\label{defdegree}  (Degree of exactness) (\cite{gautschi}, p. 157)
 
 \noindent
 A quadrature rule $R_n(f)=\sum_{i=1}^{n} c_i\, f(x_i)$ has (polynomial) {\it degree of exactness} $d$ if the rule is exact whenever $f$ is a polynomial of degree $\leq d$, that is
 $$
 E_n(f)=I(f)-R_n(f)=0\quad \mbox{for all}\quad f\,\in {\cal P}_d.
 $$
 The degree of the quadrature rule is denoted by $deg(R)$. When $deg(R)=n-1$ the rule is called interpolatory.
 \end{defn}

     \noindent
      In  particular for a $n$\--point panel  the interpolating polynomial $p$  satisfies
\begin{equation}\label{eq1}
   f(x)=p(x)+r(x),\qquad x\,\in [a,b],
\end{equation}
   where  $p$ can be written in Newton's form   (see for instance \cite{steffensen}, p. 23, or any standard text in numerical analysis)
\begin{equation}\label{eq1nova}
   p(x)=f_1+f[x_1,x_2]\,(x-x_1)+\cdots+f[x_1,x_2,\ldots,x_n]\,(x-x_1)\,(x-x_2)\cdots (x-x_{n-1})\\
\end{equation}
   and the remainder\--term is
 \begin{equation}\label{eq1novaA}
   r(x)=f[x_1,x_2,\ldots,x_n,x]\,  (x-x_1)\,(x-x_2)\cdots (x-x_{n}),
\end{equation}
where  $f_i=f(x_i)$ and  $f[x_1,x_{2},\ldots, x_{1+j}]$, for $j\geq 0$, denotes the $j$\--th  divided difference of the data $(x_i,f_i)$, with  $i=1,\ldots, n$. 
Therefore  from \eqref{eq1} we obtain
\begin{equation}\label{eq1novaC}
I(f)=\int_a^b p(x) dx+ \int_a^b r(x) dx= R_n(f)+ E_{R_n}(f),
\end{equation} 
\noindent
 where $ E_{R_n}(f)$ denotes the true error of the rule $R_n(f)$. Thus,  
\begin{equation}\label{ruleX}
\begin{array}{ll}
R_n(f)&=\int_{x_1}^{x_n} w_0(x) dx\times f_1+\int_{x_1}^{x_n} w_1(x) dx\times f[x_1,x_2]+\cdots+\\
&+\int_{x_1}^{x_n} w_{n-1}(x) dx\times f[x_1,x_2,\ldots,x_n]
\end{array}
\end{equation}
The expression  \eqref{ruleX} suggests that the application of the undetermined coefficient method using the Newton's basis for polynomials should be rewarding since the successive divided differences are trivial for such a basis. In particular,  the weights for  the extended rule in model A are trivially computed.

\begin{prop}\label{pesosA}
The weights for the rule $S_n(f)$ in model A are  
\begin{equation}\label{pesosparaA}
a_i=I(w_{i-1})=\int_0^{(n-1)\,h} \, w_{i-1}(t) dt\qquad  i=1,2,\ldots,n
\end{equation}

\end{prop}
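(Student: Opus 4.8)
The plan is to determine the weights by the undetermined coefficients method, imposing on the rule \eqref{modeloA} the interpolatory condition of Definition \ref{defdegree}, namely that $S_n$ be exact on $\mathcal{P}_{n-1}$. By linearity of both $I$ and $S_n$ it is enough to require exactness on a single basis of $\mathcal{P}_{n-1}$, and the whole point is to test against the Newton basis $\{w_0, w_1, \ldots, w_{n-1}\}$ of \eqref{stand2} rather than the canonical one. The reason is that, for this choice, the linear system for the $a_i$ decouples into a diagonal one, so that each equation reads off a single weight.

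First I would record the divided differences of the basis polynomials themselves. Fix $k \in \{0, 1, \ldots, n-1\}$ and consider the $(j-1)$-th divided difference $w_k[x_1, \ldots, x_j]$ for $j = 1, \ldots, n$. Since $w_k$ is a polynomial of degree exactly $k$ with leading coefficient $1$, and since $w_k$ vanishes at the nodes $x_1, \ldots, x_k$ by construction in \eqref{stand2}, the standard properties of divided differences (the divided difference is the leading coefficient of the polynomial interpolating $w_k$ at the listed nodes; it is $0$ for $j \le k$ because $w_k$ vanishes there; it equals the leading coefficient $1$ for $j = k+1$; and it vanishes for $j > k+1$, the order then exceeding the degree) yield
\begin{equation*}
w_k[x_1, \ldots, x_j] = \begin{cases} 1, & j = k+1, \\ 0, & j \neq k+1. \end{cases}
\end{equation*}
This Kronecker-delta behaviour is the crux of the argument and the step I expect to require the most care, since it is precisely what collapses the system to diagonal form.

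With this in hand the conclusion is immediate. Substituting $f = w_k$ into \eqref{modeloA} and using the display above, every term of the correction drops out except the one indexed by $j = k+1$, so that $S_n(w_k) = a_{k+1}$. Imposing exactness, $S_n(w_k) = I(w_k)$, gives $a_{k+1} = I(w_k) = \int_{x_1}^{x_n} w_k(x)\,dx$ for each $k$, that is $a_i = I(w_{i-1})$. Finally I would pass to the normalized variable $t = x - x_1$: because the nodes are equally spaced, $x_j = x_1 + (j-1)h$ gives $x - x_j = t - (j-1)h$, so each $w_{i-1}(x)$ coincides with the Newton basis polynomial on the shifted nodes $0, h, \ldots, (n-1)h$, while the limits $x_1, x_n$ become $0, (n-1)h$. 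This turns $I(w_{i-1})$ into $\int_0^{(n-1)h} w_{i-1}(t)\,dt$, which is exactly \eqref{pesosparaA}.
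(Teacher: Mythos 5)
Your proof is correct and follows essentially the same route as the paper: apply the undetermined coefficients method to the Newton basis, observe that the divided differences $w_k[x_1,\ldots,x_j]$ reduce to a Kronecker delta so that $S_n(w_k)=a_{k+1}$ and the system is diagonal, then normalize the nodes to $0,h,\ldots,(n-1)h$. The only difference is presentational: you justify the delta behaviour of the divided differences explicitly (which the paper asserts tersely and records separately in \eqref{krylovB}) and perform the change of variable at the end rather than at the outset.
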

\begin{proof}

\noindent
The divided differences do not depend on a particular node but on the distance between nodes. Thus  for any given $n$\--point panel of constant step $h$, we can assume without loss of generality that $x_1=0,x_2=h,\ldots, x_n=(n-1)\, h$.  Considering  the Newton's basis for polynomials $w_0 (t)=1$, $w_1(t)=t$, $\ldots$, $w_{n-1} (t)= t\,(t-h)\cdots (t-(n-2)\,h)$, where $0\leq t\leq (n-1)\,h$,  from \eqref{modeloA} we have
$$
S_n (w_0)=a_1, \quad S_n(w_1)=a_2,\ldots, S_n(w_i)= a_{i+1}\quad \mbox{for} \quad i=0,\ldots,(n-1).
$$
The undetermined coefficients method  applied to the Newton's basis $<w_0(t)$,  $w_1(t) $, $\ldots,w_{n-1}(t)>$ leads to the $n$ conditions $a_i=I(w_{i-1})$ or, equivalently, to a diagonal system of linear equations whose matrix is the identity. The equalities in \eqref{pesosparaA} can also be obtained directly from \eqref{ruleX}.
\end{proof}
  \noindent
Theoretical expressions for the error $E_{R_n}(f)$ in \eqref{eq1novaC} can be obtained  either  via  the mean value theorem for integrals or by  considering the so\--called  Peano kernel (\cite {davis}, p. 285, \cite{ gautschi}, p. 176).  However, we will use the method of undetermined coefficients  whenever theoretical expressions for the errors $E_{Q_n}$ and $E_{S_n}$ are needed.

\bigbreak
\noindent
For sufficiently smooth functions $f$, the fundamental relationship between divided differences on a given panel and the derivatives of $f$ is given by the following well known result,
  
 \begin{prop}\label{difderivA} (\cite{steffensen}, p. 24),(\cite{gautschi} p. 101]), (\cite{krylov}, p. 41)
 
 \noindent
Given $n\,\,(n\geq 2)$ distinct nodes $\{x_1,\ldots,x_n\}$ in $ J=[a,b] $, and $f\in C^{n-1}(J)$, there exists $\xi \in\,(x_1,x_n)$ such that 
\begin{equation}
 \begin{array}{l}\label{difderiv}
  f[x_1,x_2,\ldots,x_{n}]=\displaystyle{\frac{f^{(n-1)}(\xi)}{(n-1)!}}.
\end{array}
\end{equation}
\end{prop}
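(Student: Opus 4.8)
The plan is to combine the Newton form of the interpolating polynomial, already displayed in \eqref{eq1nova}, with the generalized mean value theorem obtained by iterating Rolle's theorem. First I would let $p \in {\cal P}_{n-1}$ be the polynomial interpolating $f$ at the $n$ distinct nodes, written in Newton's form \eqref{eq1nova}. Reading off the term of highest degree, the leading coefficient of $p$ (the coefficient of $x^{n-1}$) is precisely $f[x_1,x_2,\ldots,x_n]$. Consequently, differentiating $p$ exactly $n-1$ times annihilates every lower-order term and yields the constant $p^{(n-1)}(x)=(n-1)!\,f[x_1,x_2,\ldots,x_n]$. This identity is what converts the statement about a divided difference into a statement about an $(n-1)$-th derivative.

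Next I would introduce the auxiliary error function $g(x)=f(x)-p(x)$. Since $p$ interpolates $f$ at all $n$ nodes, $g$ vanishes at $x_1,x_2,\ldots,x_n$, and since $f\in C^{n-1}(J)$ and $p$ is a polynomial, $g\in C^{n-1}(J)$. Assuming without loss of generality that the nodes are ordered $x_1<x_2<\cdots<x_n$, the core of the argument is a descent by Rolle's theorem: between each pair of consecutive zeros of $g$ there lies a zero of $g'$, so $g'$ has at least $n-1$ zeros in $(x_1,x_n)$; inductively $g^{(k)}$ has at least $n-k$ zeros in that open interval. Taking $k=n-1$ produces at least one point $\xi\in(x_1,x_n)$ with $g^{(n-1)}(\xi)=0$.

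Finally I would combine the two ingredients. Since $g^{(n-1)}(x)=f^{(n-1)}(x)-p^{(n-1)}(x)=f^{(n-1)}(x)-(n-1)!\,f[x_1,\ldots,x_n]$, evaluating at $\xi$ and using $g^{(n-1)}(\xi)=0$ gives $f^{(n-1)}(\xi)=(n-1)!\,f[x_1,\ldots,x_n]$, which is exactly \eqref{difderiv} after dividing by $(n-1)!$. The step I expect to require the most care is the Rolle descent: one must check that the hypothesis $f\in C^{n-1}(J)$ legitimately supports $n-1$ successive applications of Rolle's theorem (continuity on each closed subinterval and differentiability on its interior at every stage) and that the zero count is maintained strictly inside $(x_1,x_n)$, so that the final $\xi$ is genuinely interior, as claimed. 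The distinctness of the nodes is essential here, since it is what guarantees $n$ honest starting zeros of $g$ from which the descent can begin.
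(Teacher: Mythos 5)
Your argument is correct: identifying the leading coefficient of the Newton form of the interpolating polynomial as $f[x_1,\ldots,x_n]$, so that $p^{(n-1)}\equiv (n-1)!\,f[x_1,\ldots,x_n]$, and then applying Rolle's theorem $n-1$ times to $g=f-p$ (which has $n$ distinct zeros) to locate $\xi\in(x_1,x_n)$ with $g^{(n-1)}(\xi)=0$, is exactly the classical proof. Note that the paper itself offers no proof of this proposition --- it is stated as a well-known result with citations to Steffensen, Gautschi and Krylov --- so there is nothing to compare against; your write-up is the standard textbook argument that those references contain, and your attention to the regularity needed for the Rolle descent (continuity of $g^{(n-2)}$ on closed subintervals, differentiability on their interiors, guaranteed by $f\in C^{n-1}(J)$) is precisely the right point to be careful about.
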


\noindent
Applying \eqref{difderiv} to the canonical or Newton's basis, we get
\begin{equation}\label{krylovB}
\begin{array}{l}
\phi_j[x_1,x_2,\ldots,x_n]=w_j[x_1,x_2,\ldots,x_n]=0,\qquad \mbox{for}\quad j=0,1,\ldots,(n-2)\\
\phi_{n-1}[x_1,x_2,\ldots,x_n]=w_{n-1}[x_1,x_2,\ldots,x_n]=1.
\end{array}
\end{equation}

\noindent
By construction, the rules $S_n$ in models $A$ and $B$ are at least of degree $n-1$ of precision according to Definition \ref{defdegree}.
 
 \bigbreak
 \noindent
   In this work the undetermined coefficients method enables us to obtain both the  weights and theoretical error formulas. This apparently contradicts the following assertion due to Walter Gautschi (\cite{gautschi}, p. 176):  \lq\lq{The method of undetermined coefficients, in contrast, generates only the coefficients in the approximation and gives no clue as to the approximation error}\rq\rq. 

\bigbreak
\noindent
Note that by Definition \ref{defdegree} the theoretical error  \eqref{nova1} says  that $deg(Q)=1$ for the trapezoidal rule,  and  from  \eqref{nova1a} one  concludes  that $deg(Q)=3$ for the Simpson's rule. This suggests the following assumption.
 
 \begin{axiom}\label{gautshinova}
Let be given a $n$\--point ($n\geq 1$) panel with constant step $h>0$,  a sufficiently smooth function $f$ defined on the interval $[a,b]$, and a quadrature rule  $R(f)$  (interpolatory or not)  of degree $m$,  there exists a constant $K_h\neq 0$ (depending on a certain power of $h$) and a point $\xi$, such that

 \begin{equation}\label{lab1}
E(f)=I(f)-R(f)= K_h\, f^{(m+1)}(\xi), \qquad \xi\in (a,b)
\end{equation}
where de derivative $f^{(m+1)}$ is not identically null in $[a,b]$, and $m$ is the \it {least} integer for which \eqref{lab1} holds.
 \end{axiom}
 
 \noindent
 The expression \eqref{lab1} is crucial in order to deduce formulas for the theoretical error of  the rules in model $A$ or $B$.
 
 \begin{prop}\label{corgaut} Under  Assumption \ref{gautshinova}, the constant $K_h$ in \eqref{lab1} is
 \begin{equation}\label{lab2}
K_h=\displaystyle{\frac{I(\omega_{m+1})-R(\omega_{m+1})}{(m+1)!}},
\end{equation}
where $\omega_{m+1}(x)$ is either the element  $\phi_{m+1}(x)$  of the canonical basis, or the element  $w_{m+1}(x)$ of the Newton's basis, or any polynomial of degree $m+1$ taken from any basis for polynomials used to apply the undetermined coefficients method.
 \end{prop}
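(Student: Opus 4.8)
The plan is to exploit the one feature of a polynomial of degree $m+1$ that makes the existential statement of Assumption \ref{gautshinova} effective: its $(m+1)$-th derivative is a constant, so the unknown point $\xi$ in \eqref{lab1} drops out of the error formula altogether. First I would specialize \eqref{lab1} to the function $f=\omega_{m+1}$, obtaining
\begin{equation*}
I(\omega_{m+1})-R(\omega_{m+1})=K_h\,\omega_{m+1}^{(m+1)}(\xi).
\end{equation*}
For the two named choices this is immediate: $\omega_{m+1}=\phi_{m+1}$ means $\omega_{m+1}(x)=x^{m+1}$, while $\omega_{m+1}=w_{m+1}$ means $\omega_{m+1}(x)=(x-x_1)\cdots(x-x_{m+1})$, and both are monic of degree $m+1$, so $\omega_{m+1}^{(m+1)}(x)\equiv(m+1)!$ independently of $x$. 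Substituting this constant value and solving for $K_h$ yields \eqref{lab2} directly, and the point $\xi$ never has to be located.

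Next I would argue that the right-hand side of \eqref{lab2} does not in fact depend on the particular degree-$(m+1)$ element chosen, which is what licenses the phrase \lq\lq any polynomial of degree $m+1$\rq\rq. The tool here is that, by Assumption \ref{gautshinova}, the rule $R$ has degree of exactness $m$, so by Definition \ref{defdegree} the error functional $E=I-R$ annihilates ${\cal P}_m$. Any monic polynomial of degree $m+1$ differs from $\phi_{m+1}$ by an element of ${\cal P}_m$; by linearity of $E$ that difference contributes nothing, so $I(\omega_{m+1})-R(\omega_{m+1})$ takes one and the same value for every monic representative. This shows the expression in \eqref{lab2} is well defined and basis-free.

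I do not expect a genuinely hard step here: once the observation about the constant $(m+1)$-th derivative is in place, the proof is essentially a one-line substitution. The only points requiring a word of care are consistency and normalization. For consistency, note that $K_h\neq0$ is part of the hypothesis, and correspondingly $I(\omega_{m+1})-R(\omega_{m+1})\neq0$, since $m$ is the \emph{least} integer for which \eqref{lab1} holds: were the error to vanish on $\omega_{m+1}$ as well as on ${\cal P}_m$, then $R$ would be exact on the whole of ${\cal P}_{m+1}$ (the span of ${\cal P}_m$ together with $\omega_{m+1}$), contradicting the minimality of $m$. For normalization, I would remark that the clean form \eqref{lab2} is tied to monic representatives; for a general polynomial of degree $m+1$ with leading coefficient $c_{m+1}$ the same computation gives $K_h=[I(\omega_{m+1})-R(\omega_{m+1})]/(c_{m+1}\,(m+1)!)$, which collapses to \eqref{lab2} exactly when $c_{m+1}=1$, as it is for both named bases.
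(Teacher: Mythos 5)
Your proof is correct and follows essentially the same route as the paper: substitute $f=\omega_{m+1}$ into \eqref{lab1}, observe that the $(m+1)$-th derivative of a monic degree-$(m+1)$ polynomial is the constant $(m+1)!$ so that $\xi$ drops out, and solve for $K_h$. Your added remarks on well-definedness across monic representatives and on the normalization caveat (the stated formula \eqref{lab2} requires leading coefficient $1$, which both named bases satisfy) are sound refinements that the paper leaves implicit.
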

\begin{proof} 

\noindent
For any nonnegative integer $i\leq m$, from  \eqref{lab1} we get $E(\omega_i)=I(\omega_i)-R(\omega_i)= K_h\times 0$. As $m$ is the least integer for which the righthand side of \eqref{lab1} is non zero, and $deg(R)=m$, one has
$$
I(\omega_{m+1})-R(\omega_{m+1})=K_h\times \omega_{m+1}^{(m+1)} (x) =K_h\times (m+1)! ,
$$
from which it follows \eqref{lab2}. 
\noindent
As for a fixed  basis the interpolating polynomial is unique,  it follows that the error for the corresponding  interpolatory rule is unique as well.
\end{proof}

\noindent
In Proposition \ref{pCA} we  show that  $deg(S_n(f))$ depends on the parity of $n$. Thus, we recover a well known result about the precision of the Newton\--Cotes rules, since $S_n(f)$ is algebraically  equivalent to a closed $Newton\--Cotes$ formula with $n$ nodes.  Let us first prove the following lemma.
\begin{lemma}\label{lema1}
Consider the Newton's polynomials
$$
\begin{array}{l}\label{w}
w_0(t)=1\\
w_1(t)=t\\
w_j(t)=\prod_{i=1}^{j-1} t\, (t-i\,h),\quad j\geq 2.
\end{array}
$$
Let $n\geq 1$ be an integer and  $I^{[-1]},I^{[0]}$ and $I^{[-2]}$ the following integrals:
$$
\begin{array}{l}
I^{[-1]}(w_n)=\int_{0}^{(n-1)\,h} w_n(t) dt\\
I^{[0]}(w_n)=\int_{0}^{n\,h} w_n(t) dt\\
I^{[-2]}(w_n)=\int_{0}^{(n-2)\,h} w_n(t) dt.\\
\end{array}
$$
Then, 

\noindent
(a) $I^{[-1]}(w_n)\neq 0$ for $n$ even and  $I^{[-1]}(w_n)= 0$  for $n$ odd;

\noindent
(b) $I^{[0]}(w_n)\neq 0$  for $n\geq 2$ and  $I^{[-2]}(w_n)\neq 0$  for $n\neq 3$.

\end{lemma}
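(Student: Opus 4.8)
The plan is to strip away the step $h$ by a single substitution and then to read off everything from a reflection symmetry of the resulting monic polynomial together with the sign pattern of that polynomial on consecutive unit subintervals. Setting $t=hs$ gives $w_n(hs)=h^{n}\,s(s-1)\cdots(s-(n-1))$, so if I write $\pi_n(s)=\prod_{k=0}^{n-1}(s-k)$ then $I^{[-1]}(w_n)=h^{n+1}\int_0^{n-1}\pi_n$, $I^{[0]}(w_n)=h^{n+1}\int_0^{n}\pi_n$ and $I^{[-2]}(w_n)=h^{n+1}\int_0^{n-2}\pi_n$; since $h>0$, the sign and the vanishing of each $I$ are governed by the corresponding integral of $\pi_n$. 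The structural fact I would establish first is the reflection identity $\pi_n(n-1-s)=(-1)^n\pi_n(s)$, which holds because $s\mapsto n-1-s$ permutes the root set $\{0,1,\ldots,n-1\}$ and contributes one sign per factor. Thus $\pi_n$ is symmetric about the midpoint $(n-1)/2$ when $n$ is even and antisymmetric about it when $n$ is odd.

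This symmetry immediately yields the clean half of (a): for $n$ odd the interval $[0,n-1]$ is centred at $(n-1)/2$ and $\pi_n$ is antisymmetric there, so $\int_0^{n-1}\pi_n=0$. The same antisymmetry settles the easy part of (b): for $n$ odd, $\int_0^n\pi_n=\int_0^{n-1}\pi_n+\int_{n-1}^n\pi_n=\int_{n-1}^n\pi_n$, and on $(n-1,n)$ every factor $s-k$ is positive, so $\pi_n>0$ there and $I^{[0]}(w_n)\neq 0$. I would dispose of the smallest indices separately, noting in particular that for $n=2$ the interval $[0,n-2]$ degenerates to a point.

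The remaining assertions are all nonvanishing statements in non-symmetric situations, and they are the heart of the matter. The tool I would use is the behaviour of $\pi_n$ across unit subintervals: on $(k,k+1)$ exactly $n-1-k$ factors are negative, so $\pi_n$ has the constant sign $(-1)^{n-1-k}$ there, and writing $A_k=\int_k^{k+1}|\pi_n|>0$ turns each of the three integrals into an alternating sum of the $A_k$, with $A_k=A_{n-2-k}$ by the reflection identity. The quantitative input is the telescoping ratio $\pi_n(s+1)/\pi_n(s)=(s+1)/(s-n+1)$; it shows $|\pi_n(s+1)|<|\pi_n(s)|$ exactly when $s<(n-2)/2$, so the areas $A_k$ are \emph{strictly} decreasing as $k$ runs from either end toward the centre and strictly increasing on the subintervals lying outside the root span (such as $(-1,0)$ and $(n-1,n)$).

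With this monotonicity I would fold each alternating sum about its centre using $A_k=A_{n-2-k}$, reducing it, up to a single central (or truncation) term, to an alternating series of strictly decreasing positive areas, which has the sign of its leading term and cannot vanish. For (a) with $n$ even this gives $\int_0^{n-1}\pi_n\neq0$; for $I^{[0]}(w_n)$ with $n$ even the extra outermost area $\int_{n-1}^n\pi_n$ is the largest of all the $A_k$, so it dominates the alternating remainder and forces $\int_0^n\pi_n\neq0$; and for $I^{[-2]}(w_n)$, where the upper endpoint $s=n-2$ is itself a root, the same monotone alternating-sum analysis applies on $[0,n-2]$, supplemented by a direct check of the small cases. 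The main obstacle I anticipate is precisely this final step: ruling out an exact cancellation, i.e. showing that the central or truncation term cannot annihilate the folded alternating series. This is the only place where the argument goes beyond bookkeeping, and it is where the \emph{strictness} of the monotonicity of the $A_k$, rather than mere sign information, has to be invoked.
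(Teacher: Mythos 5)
Your proposal is sound, and on the vanishing half it coincides with the paper's own argument: the paper likewise rescales and recentres the integration interval at the midpoint $(n-1)/2$, so that the integrand of $I^{[-1]}(w_n)$ becomes odd for $n$ odd (whence the integral vanishes) and even for $n$ even; your reflection identity $\pi_n(n-1-s)=(-1)^n\pi_n(s)$ is exactly that observation in uncentred coordinates. Where you genuinely depart from the paper is on the nonvanishing claims, and this is to your credit. The paper's proof of (a) for $n$ even ends with ``the integrand is an even function, thus $I^{[-1]}(w_n)\neq 0$'', which as written is a non sequitur (evenness over a symmetric interval does not preclude cancellation), and part (b) is dismissed as ``analogous''. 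Your machinery --- the constant sign $(-1)^{n-1-k}$ of $\pi_n$ on $(k,k+1)$, the symmetric areas $A_k=A_{n-2-k}$, and the strict monotonicity of the $A_k$ toward the centre obtained from the ratio $\pi_n(s+1)/\pi_n(s)=(s+1)/(s-n+1)$ --- is precisely what is needed to convert each integral into a folded alternating sum that cannot vanish; it is the classical route to the sign of the Newton--Cotes error constants, and it supplies a complete argument where the paper only gestures. One caveat: in the $I^{[0]}(w_n)$, $n$ even case, ``the outermost area is the largest of all the $A_k$'' is not by itself enough, since after folding the absolute value of $\int_0^{n-1}\pi_n$ can exceed $A_0$ (already for $n=4$ it equals $2A_0-A_1>A_0$); you need the quantitative form of your own ratio estimate, namely $\int_{n-1}^{n}\lvert\pi_n\rvert>(n-1)A_{n-2}=(n-1)A_0$, which comfortably dominates the bound $\bigl\lvert\int_0^{n-1}\pi_n\bigr\rvert<3A_0$ for $n\geq 4$, with $n=2$ checked directly. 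With that single inequality made explicit, and the degenerate cases $I^{[-1]}(w_1)=\int_0^0$ and $I^{[-2]}(w_2)=\int_0^0$ noted as you do, your sketch closes all the gaps the paper leaves open.
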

\begin{proof}

\noindent
(a) For $n=1$ it is obvious that $I^{[-1]}(1)=0$. For  any integer $n\geq 2$,  let us change  the integration interval $[0,(n-1)\, h]$ into the interval ${\cal I}=[-(n-1)/2,(n-1)/2]$ and consider the bijection $x=\omega(t)=h\,(x+(n-1)/2)$.
For $n$ odd, we obtain
\begin{align*}
I^{[-1]}(w_n)&= \int_{0}^{(n-1)\,h} w_n(t) dt\\
&= h^{n+1}\int_{-(n-1)/2}^{(n-1)/2} \left[\left(t^2-\left( \frac{n-1}{2}\right)^2\right)\,\left(t^2- \left(\frac{n-1}{2}-1\right)^2\right)\cdot\right.\\
&\left.\qquad\cdot \left(t^2- \left(\frac{n-1}{2}-2\right)^2\right)\cdots  \left(t^2- 1\right)\, t\,\right]\,dt .
\end{align*}

\noindent
As the integrand  is an odd function in  ${\cal I}$, we have  $I^{[-1]}(w_n)=0$. 

\noindent
For $n$  even, we get
\begin{align*}
I^{[-1]}(w_n)&= \int_{0}^{(n-1)\,h} w_n(t) dt\\
&= h^{n+1}\int_{-(n-1)/2}^{(n-1)/2}\left[\left(t^2- \frac{(n-1)^2}{4}\right)\,\left(t^2- \frac{(n-3)^2}{4}\right)\cdots\right.\\
&\qquad\left. \cdots\left(t^2- \frac{(n-5)^2}{4}\right)\cdots  (t^2- 1/4)\right] \,dt,
\end{align*}
where the integrand is an even function, thus $I^{[-1]}(w_n)\neq 0$. 

\noindent
(b) The proof is analogous so it is ommited.
\end{proof}

\noindent
The degree of precision for the rules in models A and B, and  the respective true errors can be easily obtained using the undetermined coefficients method, the Lemma \ref{lema1} and Proposition \ref{corgaut}. The next propositions ( \ref{pCA}, \ref{pcAA} and  \ref{propA})  establish the theoretical errors and degree of precision of these rules. In particular, in  Proposition \ref{pCA} we recover a classical result on the theoretical error for the rule $S_n(f)$ \--- see for instance  \cite{isaacson}, p. 313.
\begin{prop}\label{pCA}
Consider the rule $S_n(f)$  for the models $A$ or $B$ defined in the panel $\{ (x_1,f_1),\ldots,(x_n,f_n)  \}$, and assume that $J=[a,b]$ is a finite interval containing the nodes $x_1,\ldots,x_n$, for $n\geq 2$. Let $w_n(x)$ denote the Newton's polynomial of degree $n$.  The respective degree of precision and true error $E_{S_n}=I(f)-S_n(f)$ are the following:

\noindent
(i) If $n$ is odd and $f\in C^{n+1}(J)$, then
\begin{equation}\label{ca1}
\begin{array}{l}
deg(S_n)=n\\
E_{S_n}=\displaystyle{ \frac{ I(w_{n+1})}{ (n+1)!}   } \, f^{(n+1)} (\xi), \qquad \xi\in J.
\end{array}
\end{equation}

\noindent
(ii) If $n$ is even and $f\in C^{n}(J)$, then
\begin{equation}\label{ca2}
\begin{array}{l}
deg(S_n)=n-1\\
E_{S_n}=\displaystyle{ \frac{ I(w_{n})}{ n!}   } \, f^{(n)} (\xi), \qquad \xi\in J.
\end{array}
\end{equation}
\end{prop}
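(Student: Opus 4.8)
The plan is to first pin down the exact degree of precision $m=\deg(S_n)$ and then feed that value into Proposition~\ref{corgaut} to read off the error constant. Since $S_n$ is interpolatory it is exact on ${\cal P}_{n-1}$, so $\deg(S_n)\geq n-1$ at the outset; the whole question is therefore whether $S_n$ happens to integrate exactly the next one or two Newton polynomials. The decisive observation, which I would isolate first, is that the nodal polynomial $w_n(t)=t(t-h)\cdots(t-(n-1)h)$ vanishes at every node $x_1,\dots,x_n$, and that $w_{n+1}(t)=t(t-h)\cdots(t-nh)$ vanishes at those same $n$ nodes as well (its extra root being $nh=x_{n+1}$, which is not a node). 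Because a divided difference is a linear combination of the sampled values, all the data passed to either model then vanish, so $S_n(w_n)=S_n(w_{n+1})=0$; equivalently, the interpolant of a function that is zero at all $n$ nodes is the zero polynomial, whence its integral is zero. This is model-independent, which is what lets me treat models $A$ and $B$ simultaneously.

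With this in hand, computing the relevant errors reduces to computing plain integrals. Using $E_{S_n}(g)=I(g)-S_n(g)$ together with $S_n(w_n)=0$ gives $E_{S_n}(w_n)=I(w_n)=I^{[-1]}(w_n)$, and likewise $E_{S_n}(w_{n+1})=I(w_{n+1})=I^{[-1]}(w_{n+1})$. Now I would invoke Lemma~\ref{lema1}(a). For $n$ even, $I^{[-1]}(w_n)\neq 0$, so $S_n$ is \emph{not} exact on $w_n$ and, since ${\cal P}_n$ is spanned by ${\cal P}_{n-1}$ together with $w_n$, we get $\deg(S_n)=n-1$. For $n$ odd, $I^{[-1]}(w_n)=0$, so $S_n$ is exact on $w_n$ and hence on all of ${\cal P}_n$, giving $\deg(S_n)\geq n$; to see the degree is exactly $n$ I note that $n+1$ is then even, so $I^{[-1]}(w_{n+1})\neq 0$ by the same part of the lemma, i.e. $S_n$ fails to be exact on $w_{n+1}$. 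This also checks the minimality clause required by Assumption~\ref{gautshinova}.

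Finally I would apply Proposition~\ref{corgaut} with $\omega_{m+1}=w_{m+1}$. Its formula \eqref{lab2} reads $K_h=(I(w_{m+1})-S_n(w_{m+1}))/(m+1)!$, and since $w_{m+1}$ is precisely one of the polynomials shown above to be annihilated by $S_n$ (namely $w_{n+1}$ when $n$ is odd, $w_n$ when $n$ is even), the subtracted term drops out and $K_h=I(w_{m+1})/(m+1)!$. Substituting $m=n$ for $n$ odd yields $K_h=I(w_{n+1})/(n+1)!$ and, via \eqref{lab1}, the error formula \eqref{ca1}; substituting $m=n-1$ for $n$ even yields $K_h=I(w_n)/n!$ and the formula \eqref{ca2}, with the existence of $\xi\in J$ and the derivative order $f^{(m+1)}$ supplied by Assumption~\ref{gautshinova} (which requires $f\in C^{n+1}(J)$ in the odd case and $f\in C^{n}(J)$ in the even case). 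The one genuinely load-bearing step is the vanishing $S_n(w_n)=S_n(w_{n+1})=0$: it is elementary, but it is exactly what collapses the error constant to a single integral and lets the parity dichotomy of Lemma~\ref{lema1} do all the remaining work.
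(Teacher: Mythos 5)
Your overall route is the same as the paper's: show $S_n(w_n)=S_n(w_{n+1})=0$ because these Newton polynomials vanish at every node of the panel, use Lemma~\ref{lema1} to decide which of $I(w_n)$, $I(w_{n+1})$ is zero, and then read the error constant off Proposition~\ref{corgaut} with $\omega_{m+1}=w_{m+1}$. (You also write out the even case, which the paper's own proof omits.) There is, however, one concrete misstep in the odd case: you identify $I(w_{n+1})$ with $I^{[-1]}(w_{n+1})$ and invoke Lemma~\ref{lema1}(a) for its nonvanishing. The superscripts in Lemma~\ref{lema1} are pinned to the degree of the polynomial being integrated, not to the size of the panel: $I^{[-1]}(w_{n+1})=\int_0^{nh}w_{n+1}(t)\,dt$, whereas the integral that actually enters the error of $S_n$ is taken over the panel $[x_1,x_n]=[0,(n-1)h]$, i.e.
\[
I(w_{n+1})=\int_0^{(n-1)h}w_{n+1}(t)\,dt=I^{[-2]}(w_{n+1}).
\]
So Lemma~\ref{lema1}(a) applied to $w_{n+1}$ gives you the nonvanishing of the wrong integral; the fact you need is the $I^{[-2]}$ clause of Lemma~\ref{lema1}(b), namely $I^{[-2]}(w_m)\neq 0$ for $m\neq 3$, applied with $m=n+1$ (admissible since $n$ odd and $n\geq 2$ force $n+1\geq 4$). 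This is precisely the citation the paper makes. The error is local and the conclusion survives, but as written your justification of $\deg(S_n)=n$ --- and hence of the minimality clause needed to apply Assumption~\ref{gautshinova} and Proposition~\ref{corgaut} --- rests on a statement about $\int_0^{nh}w_{n+1}$ that is irrelevant to the rule at hand.
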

\begin{proof}
 We can assume without loss of generality  that the panel is $\{ (0,f_1)$, $(h,f_2),\ldots$, $((n-1)h, f_n)\}$ (just translate the point $x_1$) . For $n$ even or odd, by construction of the interpolatory rule $S_n(f)$, we have $deg(S_n)\geq n-1$ in model $A$ or $B$. Taking the Newton's polynomial $w_n(t)=t (t-h)\cdots (t-(n-1)h))$, whose zeros  are $0,h,\ldots, (n-1)\,h$, we get for the divided differences in \eqref{modeloA},  
$$
w_n(0)=w_n[0,h]=\ldots=w_n[0,h,\ldots,(n-1)h]=0,
$$
where $f$ has been substituted by $w_n$ in \eqref{modeloA}. Thus, $S_n(w_n)=0$ and $S_j(w_n)=0$ for any $j\geq n+1$.

\noindent
(i) For $n$ odd, by Lemma \ref{lema1} (a) we get $I(w_n)=0$. Thus $S_n(w_n)=I(w_n)$, and so  $deg(S_n)\geq n+1$. However by Lemma \ref{lema1} (b) we have $I(w_{n+1})=\int_{0}^{(n-1)\,h} w_{n+1} (t) dt\neq 0$.  Therefore, $deg(S_n)=n$ and so \eqref{ca1} follows by  Proposition \ref{corgaut}.
\end{proof}

\begin{prop}\label{pcAA}
 Consider the rule $Q_n(f)=I(w_0)\, f(x_1)$ given by model A, and assume that $f\in C(J)$, where $J=[a,b]$ is a finite interval containing the nodes $x_1,x_2,\ldots,  x_n$, for  $n\geq 2$. Then, there exists a point $\xi\in J$ such that
\begin{equation}\label{eqpcAA1}
E_{Q_n} (f)=I(f)-Q_n(f)= I(w_1)\, f'(\xi)=\displaystyle{ \frac{(n-1)^2\, h^2}{2}   }\, f' (\xi).
\end{equation}
\end{prop}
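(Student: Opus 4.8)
The plan is to lean on the machinery already assembled, namely Assumption \ref{gautshinova} together with Proposition \ref{corgaut}, rather than to manipulate the integral remainder by hand. First I would make $Q_n$ explicit: since $w_0\equiv 1$, Proposition \ref{pesosA} gives $I(w_0)=\int_0^{(n-1)h}1\,dt=(n-1)h$, so $Q_n(f)=(n-1)h\,f(x_1)$ is the left rectangle rule, evaluating $f$ only at the first node. Because divided differences and the error depend only on the spacing, I would translate so that $x_1=0,\ x_2=h,\dots,x_n=(n-1)h$.

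The decisive step is to pin down the degree of precision $m=deg(Q_n)$, since it is $m$ that selects which derivative and which basis polynomial enter the error formula. I would test exactness on the Newton basis: $Q_n(w_0)=(n-1)h\cdot 1=I(w_0)$, so $Q_n$ is exact for constants; but $Q_n(w_1)=(n-1)h\cdot w_1(0)=0$ while $I(w_1)=\int_0^{(n-1)h}t\,dt=\tfrac{(n-1)^2h^2}{2}\neq 0$, so $Q_n$ is not exact for $w_1$ and hence not for polynomials of degree $1$. Therefore $deg(Q_n)=0$, i.e.\ $m=0$ is the least integer for which \eqref{lab1} holds.

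With $m=0$, Assumption \ref{gautshinova} yields $E_{Q_n}(f)=I(f)-Q_n(f)=K_h\,f^{(m+1)}(\xi)=K_h\,f'(\xi)$ for some $\xi\in J$. To identify the constant I would invoke Proposition \ref{corgaut} with $\omega_{m+1}=w_1$, the Newton polynomial of degree $m+1=1$, which gives
\begin{equation*}
K_h=\frac{I(w_1)-Q_n(w_1)}{(m+1)!}=\frac{I(w_1)-0}{1!}=I(w_1)=\frac{(n-1)^2h^2}{2}.
\end{equation*}
Substituting this back reproduces exactly \eqref{eqpcAA1}.

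I do not expect a serious obstacle; the one point needing care is the verification that $deg(Q_n)=0$ \emph{exactly} (not higher), because Assumption \ref{gautshinova} requires $m$ to be the least such integer with $f^{(m+1)}$ not identically zero, and the computation $Q_n(w_1)=0\neq I(w_1)$ settles precisely this. As an independent cross-check that avoids the Assumption, one could proceed directly: for $f\in C^1(J)$ write $f(t)-f(0)=t\,f[0,t]$ and note that the weight $w_1(t)=t$ keeps a constant sign on $[0,(n-1)h]$, so the weighted mean value theorem for integrals gives $I(f)-Q_n(f)=\int_0^{(n-1)h}t\,f[0,t]\,dt=f[0,\eta]\int_0^{(n-1)h}t\,dt$ for some $\eta$; Proposition \ref{difderivA} then converts $f[0,\eta]$ into $f'(\xi)$, recovering the same expression.
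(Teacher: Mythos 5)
Your proposal is correct and follows essentially the same route as the paper: the paper's proof likewise translates to $x_1=0$, observes $Q_n(w_1)=0$ while $I(w_1)\neq 0$, and invokes Proposition \ref{corgaut} to identify $K_h=I(w_1)$. Your additional cross-check via the weighted mean value theorem and your explicit verification that $deg(Q_n)=0$ are sound elaborations of details the paper leaves implicit, but they do not change the argument.
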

\begin{proof} Taking  $x_1=0$ and $w_1(t)=t$, we have $Q_n(w_1)=0$ and $I(w_1)\neq 0$. Thus, by Proposition \ref{corgaut},  we obtain the equalities in \eqref{eqpcAA1}.
\end{proof}
\begin{prop}\label{propA}
Consider the rule $Q_n(f)$ given in model B and assume that $f\in C^{n}(J)$, where $J=[a,b]$ is a finite interval containing the nodes $x_1,\ldots,x_n$, for  $n\geq 2$. Let $w_{n-1}(x)$ be the Newton's polynomial of degree $n-1$. The degree of precision for $Q_n$ is $n-2$ and there exists a point $\xi\in J$  such that
\begin{equation}\label{eqA1}
E_{Q_n}(f)=\displaystyle{  \frac{I(w_{n-1})}{(n-1)!}   }\, f^{(n-1)}(\xi).
\end{equation}
\end{prop}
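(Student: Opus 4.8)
The plan is to follow the template used for Propositions \ref{pCA} and \ref{pcAA}: normalize the panel, pin down the degree of precision of $Q_n$ by testing it against a single Newton polynomial, and then read off the error constant from Proposition \ref{corgaut}. As before I would translate so that $x_1=0,x_2=h,\ldots,x_n=(n-1)h$. In model B the rule is $Q_n(f)=a_1 f(x_1)+\cdots+a_{n-1} f(x_{n-1})$, which is the integral over $[0,(n-1)h]$ of the polynomial $p_{n-1}$ interpolating $f$ at the first $n-1$ nodes $0,h,\ldots,(n-2)h$; equivalently, the weights are fixed by the conditions $Q_n(w_j)=I(w_j)$ for $j=0,\ldots,n-2$, which are exactly the undetermined-coefficient conditions for $S_n$ in \eqref{modeloB} after the top term is stripped off, since $w_j[x_1,\ldots,x_n]=0$ for $j\leq n-2$ by \eqref{krylovB}.

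First I would establish $deg(Q_n)\geq n-2$: because $Q_n(f)=\int_0^{(n-1)h} p_{n-1}(t)\,dt$ and the interpolant at $n-1$ nodes reproduces every polynomial of degree $\leq n-2$ exactly, $Q_n$ is exact on ${\cal P}_{n-2}$. To show the degree is exactly $n-2$ I would test against the Newton polynomial $w_{n-1}(t)=t(t-h)\cdots(t-(n-2)h)$ of degree $n-1$, whose zeros are precisely the interpolation nodes $x_1,\ldots,x_{n-1}$. Since $w_{n-1}$ vanishes at each $x_i$ with $1\leq i\leq n-1$ and $Q_n$ samples $f$ only at those nodes, $Q_n(w_{n-1})=0$. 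On the other hand $I(w_{n-1})=\int_0^{(n-1)h} w_{n-1}(t)\,dt$ is exactly the quantity $I^{[0]}(w_{n-1})$ of Lemma \ref{lema1}, which is nonzero: for $n\geq 3$ this is Lemma \ref{lema1}(b), and for $n=2$ one checks directly that $I(w_1)=\int_0^h t\,dt=h^2/2\neq 0$. Hence $E_{Q_n}(w_{n-1})=I(w_{n-1})-0\neq 0$, so the degree-$(n-1)$ polynomial $w_{n-1}$ is not integrated exactly and $deg(Q_n)=n-2$.

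With $m=deg(Q_n)=n-2$ in hand, the error formula is immediate from Assumption \ref{gautshinova} and Proposition \ref{corgaut}. Assumption \ref{gautshinova} yields a constant $K_h$ and a point $\xi\in J$ with $E_{Q_n}(f)=K_h\,f^{(n-1)}(\xi)$, and taking $\omega_{m+1}=w_{n-1}$ in \eqref{lab2} gives $K_h=(I(w_{n-1})-Q_n(w_{n-1}))/(n-1)!=I(w_{n-1})/(n-1)!$ because $Q_n(w_{n-1})=0$. This is precisely the formula \eqref{eqA1}.

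The step I expect to require the most care is bookkeeping rather than genuine difficulty: one must match the integration interval $[0,(n-1)h]$ of $Q_n$ with the correct entry of Lemma \ref{lema1}, namely $I^{[0]}(w_{n-1})$ (note that $Q_n$ integrates one step past its last interpolation node, which is what makes these rules open for $n$ odd, the midpoint rule being the case $n=3$), and one must dispose separately of the boundary case $n=2$, where Lemma \ref{lema1}(b) does not formally apply. In contrast with Proposition \ref{pCA}, no parity distinction arises, precisely because $I^{[0]}(w_{n-1})$ is nonzero for both parities of $n$.
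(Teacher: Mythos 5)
Your proof follows the paper's own argument essentially verbatim: normalize the panel to $0,h,\ldots,(n-1)h$, note that $deg(Q_n)\geq n-2$ by construction, observe $Q_n(w_{n-1})=0$ because $w_{n-1}$ vanishes at the $n-1$ quadrature nodes, invoke Lemma \ref{lema1}(b) to get $I(w_{n-1})=\int_0^{(n-1)h}w_{n-1}(t)\,dt\neq 0$ so that $deg(Q_n)=n-2$ exactly, and then read off the error constant from Proposition \ref{corgaut}. Your explicit disposal of the boundary case $n=2$, where Lemma \ref{lema1}(b) does not formally apply, is a small gap in the paper's version that you correctly patch, but the route is the same.
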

\begin{proof} 

 Without loss of generality consider the panel  $\{ (0,f_1)$, $(h,f_2)$, $\ldots$, $((n-1)h, f_n)\}$. By construction, via the undetermined coefficients,  we have  $deg(Q_n)\geq (n-2)$. Taking $w_{n-1}(t) = t\, (t-h)\ldots (t-(n-2)h)$, we have $w_{n-1}(t_i)=0$, for $i=0,\ldots,(n-2)$, so $Q(w_{(n-1)})=0$. By Lemma \ref{lema1} (b) $I(w_{n-1})=\int_0^{(n-1)h} w_{n-1}(t) dt\neq 0$ and therefore $m=deg(Q_2)=n-2$. Thus, by Proposition \ref{difderivA}, there exist $\theta\in (0,(n-1)h)$ such that,
$$
E_{Q_n}(f)=I(f)-Q_n(f)= \displaystyle{  \frac{I(w_{n-1})-Q_{n-1}}{(m+1)!}   }\, f^{(m+1)}(\theta)=\displaystyle{  \frac{I(w_{n-1})}{(n-1)!}   }\, f^{(n-1)}(\theta).
$$
To the point $\theta$ it corresponds a point $\xi$ in the interval $J$, and so \eqref{eqA1} holds.
 \end{proof}

\section{Realistic errors for model A}\label{secmodeloA}
 The properties discussed in the previous Section are valid  for both models $A$ and $B$. However here we will only present some numerical examples for the rules defined by model $A$. A detailed discussion and examples for  model $B$ will be presented elsewhere.
\bigbreak
\noindent
 From \eqref{pesosparaA} we obtain immediately the weights for any rule of $n$ points  defined by model A. Such weights are displayed in Table \ref{tabelamodeloA}, for $2\leq n\leq 9$. The values displayed should be multiplied by an appropriate power of $h$ as indicated in the table's  label. According to Proposition \ref{pCA}, the last column in this table contains the value $d=deg(S_n)$ for the degree of precision of the rule $S_n(f)$.

  \begin{table}[h!]
   $$
   \hspace{-0.cm}
   \begin{array}{| c |  c  |  c   |  c   |   c  |   c | c   |   c  |   c  |   c  |   l  |   }
   \hline
   n&\begin{array}{l}  a_1 \end{array} &  \begin{array}{l}  a_2 \end{array}  & \begin{array}{l}  a_3 \end{array}  & \begin{array}{l}  a_4 \end{array}  & \begin{array}{l}  a_5 \end{array} &\begin{array}{l}  a_6\end{array}  &  \begin{array}{l}  a_7\end{array}  & \begin{array}{l}  a_8\end{array} &   \begin{array}{l}  a_9 \end{array}  & d\\
   \hline
   2&1&\frac{1}{2} &     &     &     &    &      &   &   &  1\\
   \hline
   3&2&2& \frac{2}{3}&    &    &     &    &   &  &  3\\
   \hline
   4&3&\frac{9}{2}&  \frac{9}{2} & \frac{9}{4} &   &  &  &    &  & 3\\
   \hline
   5&4&8&\frac{40}{3}& 16 & \frac{112}{15} &   &   &  &  & 5 \\
   \hline
   6&5&\frac{25}{2}&\frac{175}{6}&\frac{225}{4}&\frac{425}{6}&\frac{475}{12}&   &  &  & 5 \\
   \hline
   7&6&18&54&144&\frac{1476}{5}& 396& \frac{1476}{7}    &    &     & 7\\
   \hline
   8&7&\frac{49}{2}&\frac{539}{6}&\frac{1\,225}{4}&\frac{26\,117}{30}&\frac{7\,497}{4}&\frac{30\, 919}{12}&\frac{36\,799}{24}  &  & 7\\
   \hline
   9&8&32&\frac{416}{3}&576&\frac{31\,424}{15}&\frac{18\,688}{3}&\frac{290\,048}{21}&\frac{58\,880}{3}& \frac{506\,368}{45} & 9 \\
   \hline
     \end{array}
   $$
   \caption{\label{tabelamodeloA}  Weights for model A,  $ a_j=\int_{0}^{(n-1) h} w_{j-1}(t) dt, \quad j=1,2,\ldots, n $.  The entries in column $2$ (heading $a_1$) should be multiplied by $h$; the entries in column $3$ (heading $a_2$) multiplied  by $h^2$, and so on.}
\end{table}

\noindent
Note that, by construction, the weights in model A are positive for any $n\geq 2$. Therefore, the respective extended rule $S_n(f)$ does not suffers from the inconvenient observed in the traditional form for Newton\--Cotes rules where, for $n$ large  $(n\geq 9)$  the weights are of mixed sign  leading eventually  to losses of significance by cancellation.

\bigbreak
\noindent
The next Proposition \ref{prA} shows that a reliable computation of realistic errors for the rule $S_n(f)$, for $n\geq 3$, depends on  the behavior of  a certain function $g(x,h)$ involving certain quotients  between derivatives  of higher order of $f$ and its  first derivative.  Fortunately, in the applications, only a crude information on the function $g(x,h)$ is needed, and in practice  it will be sufficient to plot $g(x,h)$ for some different values of the step $h$, as it is illustrated in the numerical examples given in this Section (for some simple rules) and in Section \ref{seccompositeA} (for some composite rules) .

\begin{prop}\label{prA}
Consider a panel of $n\geq 2$ points and the model A for approximating $I(f)=\int_a^b f(x) dx$,  where $f$ is a sufficiently smooth function defined in the interval $J=[a,b]$ containing the panel nodes. Let
$$
\begin{array}{ll}
S_n(f)&= Q_n(f)+\tilde E_n(f)\\
&= a_1\, f(x_1)+\left\{   \sum_{k=2}^{n} a_k\, f[x_1,x_2,\ldots. x_k] \right\},
\end{array}
$$
where $a_i= I(w_{i-1}),\,\, i=1,2,\ldots,n$. Denote by $g(x,h)$ (or $g(x)$ when $h$ is fixed) the function
\begin{equation}\label{cond1A}
g(x,h)= \left|1+    \sum_{j=2}^{n-1} \displaystyle{ \frac{a_{j+1}}{a_2}  }    \,   \displaystyle{  \frac{f^{(j)}(x)}{j!\, f'(x)} }\right| .
\end{equation}

\noindent 
Assuming that
\begin{equation}\label{cond1}
 f'(x)\neq 0\quad \forall x\in \, (x_1,x_n)
 \end{equation}
 and
 \begin{equation}\label{cond2}
g(x,h)\geq h \qquad \forall x\in\, (x_1, x_n)  
\end{equation}
then, for a sufficiently small step $h>0$, the correction $\tilde E_n(f)$ is  realistic  for $Q_n(f)$. Furthermore, the true error of $S_n(f)$ can be estimated by the following realistic errors:

\noindent
(a) For $n$ \underline{odd}:
\begin{equation}\label{pnA}
\bar E_{S_n}=\displaystyle{  \frac{I(w_{n+1})}{I(w_1)} }\, \displaystyle{\frac{f[x_1,x_2,\ldots,x_n,\bar x_1,\bar x_2]}{f[x_1,x_2]}}\times \tilde E_n(f),
\end{equation}
where $\bar x_1=(x_1+x_2)/2$ and $\bar x_2=(x_{n-1}+x_n)/2$.

\noindent
(b) For $n$ \underline{even}:
\begin{equation}\label{pnB}
\bar E_{S_n}=\displaystyle{  \frac{I(w_{n})}{I(w_1)} }\, \displaystyle{\frac{f[x_1,x_2,\ldots,x_n,\bar x]}{f[x_1,x_2]}}\times \tilde E_n(f),
\end{equation}
where $\bar x=(x_1+x_2)/2$.
\end{prop}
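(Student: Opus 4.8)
The plan is to reduce everything to the leading behaviour of $\tilde E_n(f)$ and then compare it, on one hand, with the exact error $E_{Q_n}$ of Proposition~\ref{pcAA} and, on the other, with the exact error $E_{S_n}$ of Proposition~\ref{pCA}. First I would expose the dominant term of the correction by factoring
$$\tilde E_n(f)=a_2\,f[x_1,x_2]\left(1+\sum_{j=2}^{n-1}\frac{a_{j+1}}{a_2}\,\frac{f[x_1,\ldots,x_{j+1}]}{f[x_1,x_2]}\right).$$
By Proposition~\ref{difderivA} each divided difference satisfies $f[x_1,\ldots,x_{j+1}]=f^{(j)}(\xi_j)/j!$ while $f[x_1,x_2]=f'(\eta)$, and all evaluation points collapse to $x_1$ as $h\to0$; hence the parenthesised factor converges to the quantity whose absolute value is exactly $g(x_1,h)$ in \eqref{cond1A}, so that $|\tilde E_n(f)|\sim|I(w_1)|\,|f'(x_1)|\,g(x_1,h)$. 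Condition \eqref{cond1} guarantees $f[x_1,x_2]\neq0$, so this factorisation is legitimate and $a_2\,f[x_1,x_2]\neq0$.

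For the realistic-correction claim I would observe that, because $a_{j+1}/a_2=I(w_j)/I(w_1)$ is of order $h^{\,j-1}$, the parenthesised factor equals $1+O(h)$ whenever the derivative ratios remain bounded; thus $\tilde E_n(f)$ and its leading term $I(w_1)\,f[x_1,x_2]$ share both sign and magnitude, and by Proposition~\ref{pcAA} the latter coincides with $E_{Q_n}$ up to the continuity gap $f'(\eta)-f'(\xi)$. To obtain the defining inequality \eqref{erromenor} I would divide the error $E_{S_n}$ of Proposition~\ref{pCA} by $\tilde E_n(f)$: since $I(w_{n+1})/I(w_1)$ (resp. $I(w_n)/I(w_1)$) is of order $h^{n}$ (resp. $h^{\,n-1}$), the ratio $|E_{S_n}|/|\tilde E_n(f)|$ behaves like $h^{n}/g(x_1,h)$ for $n$ odd and like $h^{\,n-1}/g(x_1,h)$ for $n$ even. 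Here condition \eqref{cond2}, $g(x,h)\geq h$, is precisely what bounds the denominator from below, forcing these ratios to be at most of order $h^{\,n-1}$ and $h^{\,n-2}$ respectively and hence to tend to $0$ (for $n\geq3$), which is \eqref{erromenor}.

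For the realistic error formulas (a) and (b) the idea is to render the unknown derivative in $E_{S_n}$ computable. For $n$ odd, Proposition~\ref{pCA} gives $E_{S_n}=I(w_{n+1})\,f^{(n+1)}(\xi)/(n+1)!$; since $x_1,\ldots,x_n,\bar x_1,\bar x_2$ are $n+2$ distinct nodes (the midpoints $\bar x_1,\bar x_2$ lying strictly inside the panel for $n\geq3$), Proposition~\ref{difderivA} yields $f[x_1,\ldots,x_n,\bar x_1,\bar x_2]=f^{(n+1)}(\zeta)/(n+1)!$, and for small $h$ the points $\xi,\zeta$ are close, so $f^{(n+1)}(\xi)/(n+1)!\approx f[x_1,\ldots,x_n,\bar x_1,\bar x_2]$. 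Substituting, and then multiplying and dividing by $I(w_1)\,f[x_1,x_2]\approx\tilde E_n(f)$, produces exactly \eqref{pnA}; the even case \eqref{pnB} is identical with the single midpoint $\bar x$ and the $n$-th divided difference. The net effect is $\bar E_{S_n}=E_{S_n}\cdot G$ with $G\to1$, so that $\bar E_{S_n}/E_{S_n}\to1$.

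The hard part will not be the algebra but the passage from the qualitative statements ``$G\to1$'' and ``$\approx$'' to the quantitative notion of \emph{realistic} (same sign, leading mantissa digit off by at most one unit). This requires controlling \emph{uniformly} the two approximation gaps, namely replacing the derivative at $\xi$ by a divided difference and replacing $I(w_1)\,f[x_1,x_2]$ by $\tilde E_n(f)$, both of which are governed by the size of the ratios $f^{(j)}/f'$ condensed into $g$. I expect the delicate step to be verifying that, under \eqref{cond1}--\eqref{cond2}, the factor $G=1+O(h)$ stays close enough to $1$ that the first significant digit of $\bar E_{S_n}$ is preserved; this is exactly where the hypothesis $g(x,h)\geq h$ and the smallness of $h$ must be used jointly rather than separately.
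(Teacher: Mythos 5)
Your proposal follows essentially the same route as the paper's proof: factor $\tilde E_n(f)$ as $I(w_1)\,f'(\theta)\bigl(1+\sum_{j}\tfrac{a_{j+1}}{a_2}\tfrac{f^{(j)}}{j!f'}\bigr)$ via Proposition~\ref{difderivA}, divide the theoretical error of Proposition~\ref{pCA} by $\tilde E_n(f)$ to get a ratio of order $h^{n}/g$ (resp.\ $h^{n-1}/g$), invoke \eqref{cond2} to bound the denominator, and finally replace $f^{(n+1)}(\xi)/(n+1)!$ by the divided difference on the augmented node set to obtain \eqref{pnA}--\eqref{pnB}. Your closing caveat about the uncontrolled passage from $G\to1$ to the quantitative notion of ``realistic'' is fair — the paper's proof glosses over exactly the same point.
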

\begin{proof}

\noindent
(a)  By Proposition \ref{pCA} (i), we have
$$
I(f)-S_n(f)=\displaystyle{  \frac{I(w_{n+1})}{(n+1)!} }\,  f^{(n+1)} (\xi),\qquad \xi\in (x_1,x_n),
$$
and from Proposition \ref{difderivA} the correction $\tilde E_n(f)$ can be written as
$$
\tilde E_n(f)= a_2\, f'(\theta)+  a_3\, \displaystyle{ \frac{f^{(2)} (\theta_1)}{2!}  }  + \ldots +  a_n\, \displaystyle{ \frac{f^{(n-1)} (\theta_{n-2})}{(n-1)!}  },
$$
where $\theta\in (x_1,x_2)$, $\theta_1\in (x_1,x_3)$, $\ldots$, $\theta_{n-2}\in (x_1,x_2,\ldots,x_n)$. Therefore,
$$
\begin{array}{ll}
\tilde E_n(f) & =a_2 \,f'(\theta) \left( 1+\sum_{j=2}^{n-1}   \displaystyle{ \frac{a_{j+1}}{a_2}  }  \displaystyle{  \frac{f^{(j)} (  \theta_{j-1} )}{j!\, f'(\theta )}    }     \right)\\
&=    I(w_1) \,f'(\theta) \left( 1+\sum_{j=2}^{n-1}  \displaystyle{ \frac{a_{j+1}}{a_2}  }    \displaystyle{  \frac{f^{(j)} (  \theta_{j-1} )}{j!\, f'(  \theta)}    }     \right).
\end{array}
$$
 Thus, using the hypothesis in \eqref{cond1} we obtain 
$$
\begin{array}{ll}
\displaystyle{  \frac{I(f) -S_n(f)}{\tilde E_n(f)} }=\displaystyle{  \frac{I(w_{n+1}) }{(n+1)!\, I(w_1)}   }    \, \displaystyle{ \frac{f^{(n+1)} (\xi)}{ f'(\theta)}    }\, \displaystyle{   \frac{1}{ \left( 1+\sum_{j=2}^{n-1}  \displaystyle{ \frac{a_{j+1}}{a_2}  }   \displaystyle{  \frac{f^{(j)} (  \theta_{j-1} )}{j! \, f'(  \theta)}    }     \right)}      },
\end{array}
$$
 that is,
\begin{equation}\label{pn1}
\displaystyle{  \frac{I(f) -S_n(f)}{\tilde E_n(f)} }=\displaystyle{ \frac{c\, h^n}{(n+1)!} }    \, \displaystyle{ \frac{f^{(n+1)} (\xi)}{ f'(\theta)}    }\, \displaystyle{   \frac{1}{ \left( 1+\sum_{j=2}^{n-1}  \displaystyle{ \frac{a_{j+1}}{a_2}  }  \displaystyle{  \frac{f^{(j)} (  \theta_{j-1} )}{j! \, f'(  \theta)}    }     \right)}      },
\end{equation}
where $c$ is a constant not depending on $h$.
Thus, by  \eqref{cond2}  we get
$$
| I(f)-S_n(f)| \leq  \displaystyle{\frac{c\, h^{n-1}}{(n+1)!}    }  \left|    \frac{f^{(n+1)} (\xi)}{f'(\theta)} \right| \, | \tilde E_n(f) |.
$$
Therefore, for $h$ sufficiently small, $| I(f)-S_n(f)|<< |\tilde E_n(f)|$, that is, $\tilde E_n(f)$ is a realistic correction for $Q_n(f)$. Furthermore,
\begin{equation}\label{pn2}
| I(f)-S_n(f)| \leq   \displaystyle{\frac{c\, h^{n-1}}{(n+1)!}    }   M \, | \tilde E_n(f) |, \quad \mbox{where}\quad M=max_{x\in J}  \displaystyle{  \frac{| f^{(n+1)} (x)  |   }{|  f'(x)   |}  }.
\end{equation}
Finally, by Proposition \ref{difderivA} and the continuity of the function $f^{(n+1)}(x)$, we know that
$$
\displaystyle{   \frac{ f^{(n+1)} (\xi)    }{ (n+1)! }   } \simeq f[x_1,x_2,\ldots,x_n,\bar x_1,\bar x_2].
$$
So, from \eqref{pn1} we obtain \eqref{pnA}.

\noindent
(b) The proof is analogous so it is omitted.
\end{proof}

\noindent
The next  proposition shows that Proposition \ref{prA} for the case $n=2$ leads to the rule $S_2(f)$ which is algebraically equivalent to  the trapezoidal rule, and when $n=3$  the rule $S_3(f)$  is algebraically equivalent to  the  Simpson's rule.

\begin{prop}\label{propA3}
Let $I(f)=\int_a^b f(x) dx$, $J=[a,b]$, and  $f\in C^2(J)$. Consider the simple extended left rectangle rule
\begin{equation}\label{novaprop1}
S_2(f)=Q_2(f)+\tilde E_2(f)=h\, f(x_1)+\displaystyle{\frac{h^2}{2}}  \, f[x_1,x_2],
\end{equation}
and  $\bar x=(x_1+x_2)/2$, where $x_1=a$ and $x_2=b$.
Assuming that

$$
\begin{array}{l}
  f'(x)\neq 0,\qquad \forall x\in \, (x_1,x_2),
\end{array}
$$
then $\tilde E_2(f)$ is a realistic error for $Q_2(f)$,  for $h=(b-a)$ sufficiently small.  A realistic approximation for the true error of $S_2(f)$ is
\begin{equation}\label{k0}
\begin{array}{ll}
\bar E_{S_2}(f) =  -\displaystyle{\frac{h}{3}}\, \displaystyle{   \frac{ f[x_1,x_2,\bar x]}{f[x_1,x_2]} }\times \tilde E_2(f) .
\end{array}
\end{equation}
\end{prop}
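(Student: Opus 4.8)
The plan is to recognize Proposition \ref{propA3} as the instance $n=2$ of the general result Proposition \ref{prA}, and then to carry out the two elementary integral evaluations that reduce the generic formula \eqref{pnB} to the explicit form \eqref{k0}. First I would observe that $n=2$ is even, so part (b) of Proposition \ref{prA} is the relevant branch, and I would check that its two hypotheses hold here. Hypothesis \eqref{cond1} is precisely the standing assumption $f'(x)\neq 0$ on $(x_1,x_2)$. Hypothesis \eqref{cond2} is automatic in this case: for $n=2$ the sum $\sum_{j=2}^{n-1}$ defining $g(x,h)$ in \eqref{cond1A} is empty, so $g(x,h)\equiv 1$, whence $g(x,h)\geq h$ holds for every $h\leq 1$. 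Consequently Proposition \ref{prA}(b) applies verbatim, which already yields that $\tilde E_2(f)$ is a realistic correction for $Q_2(f)$ once $h=b-a$ is small enough.

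Next I would evaluate the two Newton integrals appearing in \eqref{pnB}. Taking $x_1=0$, $x_2=h$ so that $w_1(t)=t$ and $w_2(t)=t(t-h)$, a direct computation gives $I(w_1)=\int_0^{h} t\,dt = h^2/2$ and $I(w_2)=\int_0^{h} t(t-h)\,dt = -h^3/6$. Hence the leading ratio simplifies to $I(w_2)/I(w_1)=-h/3$. Substituting this, together with $\bar x=(x_1+x_2)/2$, into the even-case formula \eqref{pnB} with $n=2$ produces exactly \eqref{k0}.

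The argument involves no genuine obstacle; it is a specialization, and the only point requiring care is the verification that \eqref{cond2} is satisfied trivially for $n=2$ via the empty-sum observation, which also explains why the restriction $n\geq 3$ mentioned before Proposition \ref{prA} is not needed here. As an independent check one may derive \eqref{k0} directly: Proposition \ref{pCA}(ii) gives $E_{S_2}=(I(w_2)/2!)\,f^{(2)}(\eta)=-(h^3/12)\,f^{(2)}(\eta)$, the classical trapezoidal error \eqref{nova1}, while Proposition \ref{difderivA} lets one replace $f^{(2)}(\eta)/2!$ by $f[x_1,x_2,\bar x]$, so that $E_{S_2}\simeq -(h^3/6)\,f[x_1,x_2,\bar x]$. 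Combining this with $\tilde E_2(f)=(h^2/2)\,f[x_1,x_2]$ reproduces the claimed realistic error and fixes the stated sign. The same circle of ideas, using $f[x_1,x_2]=f'(\theta)$ from Proposition \ref{difderivA} against $E_{Q_2}=(h^2/2)\,f'(\xi)$ from Proposition \ref{pcAA}, confirms directly that $|I(f)-S_2(f)|\ll|\tilde E_2(f)|$ for small $h$.
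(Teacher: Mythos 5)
Your proposal is correct, and it actually contains two complete arguments. Your primary route---reading Proposition \ref{propA3} as the $n=2$ instance of Proposition \ref{prA}(b), noting that the sum defining $g(x,h)$ in \eqref{cond1A} is empty so that $g\equiv 1\geq h$ holds automatically, and then computing $I(w_1)=h^2/2$, $I(w_2)=-h^3/6$ to reduce \eqref{pnB} to \eqref{k0}---is not the path the paper takes, but it is valid and arguably more economical; its only weakness is that it leans on part (b) of Proposition \ref{prA}, whose proof the paper omits as ``analogous,'' and on the slight tension between the statement of \ref{prA} (which allows $n\geq 2$) and the surrounding text (which speaks of $n\geq 3$); your empty-sum observation resolves that tension cleanly. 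The ``independent check'' you append is, in substance, exactly the paper's own proof: the paper argues directly from Proposition \ref{pCA} that $I(f)-S_2(f)=-\tfrac{h^3}{12}f^{(2)}(\xi)$, writes $\tilde E_2(f)=\tfrac{h^2}{2}f'(\theta)$ via Proposition \ref{difderivA}, forms the quotient $-\tfrac{h}{6}f^{(2)}(\xi)/f'(\theta)$ to conclude realism for small $h$, and then replaces $f'(\theta)$ by $f[x_1,x_2]$ and $f^{(2)}(\xi)/2$ by $f[x_1,x_2,\bar x]$ to land on \eqref{k0}. What the specialization buys is brevity and a unified treatment; what the direct argument buys is self-containedness, since it does not depend on the unproved even case of the general proposition.
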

\begin{proof}  By  Proposition \ref{difderivA} there exists a point $\theta\,\in (x_1,x_2)$ such that $f[x_1,x_2]=f'(\theta)$, so $\tilde E(f)=h^2/2\, f'(\theta)$. Using Proposition \ref{pCA}, we know that
\begin{equation}\label{k01}
\begin{array}{ll}
I(f)-S_2(f)&= \displaystyle{\frac{I(w_2)}{2}} \, f^{(2)}(\xi),\qquad \xi\in (x_1,x_2)\\
&= - \displaystyle{\frac{h^3}{12}}  \, f^{(2)} (\xi).
\end{array}
\end{equation}
As by hypothesis $f'(\theta)\neq 0$, we get
\begin{equation}\label{k02}
\displaystyle{\frac{I(f)-S_2(f)}{\tilde E_2(f)}} = - \displaystyle{ \frac{h}{6}} \displaystyle{  \frac{f^{(2)} (\xi)}{f'(\theta)}    },
\end{equation}
and so
\begin{equation}\label{eq1A3}
|I(f)-S_2(f)|\leq \displaystyle{\frac{M\,\, h}{6}      }  \, | \tilde E_2(f)|,\quad \mbox{where} \quad M=max_{x\in [x_1,x_2]} \displaystyle{ \frac{|f^{(2)}(x)|}{|f'(x)|}     }.
\end{equation}
Therefore,  for a sufficiently small $h$ the correction $\tilde E_2(f)$ is  realistic for $Q_2(f)$.
Since $f'(\theta)\simeq f[x_1,x2]$ and   $\displaystyle{ \frac{f^{2}(\xi)}{2}\simeq f[x_1,x_2,\bar x]  }  $ we obtain \eqref{k0} from \eqref{k01}.
\end{proof}

\begin{exam}\label{exemplo1modeloA}  (A realistic  error  for  $S_2(f)$)
\bigbreak
\noindent

\noindent
Let $I(f)=\int_{0}^{0.1} \sqrt{x} \,dx$. The function $f(x)=\sqrt{x}$ is not differentiable at $x=0$. However $f'(x)\neq 0$ for $x>0$, and the result \eqref{k0} still holds.   The numerical results (for $6$ digits of precision) are:

$$
\begin{array}{l}
I(f)=0.0210819\\
Q_2(f)= h\, f(0)=0\\
\tilde E_2(f)=\displaystyle{ \frac{h^2}{2}} \, f[0,0.1]=0.0158114\\
S_2 (f)=Q_2(f)+\tilde E_2 (f)=0.0158114 .
\end{array}
$$
The true error for $S_2(f)$ is

$$
E_{S_2}(f)=I(f)-S_2(f)=0.00527046\, .
$$
By \eqref{k0} the realistic error is
$$
\bar E_{S_2}(f) = -\displaystyle { \frac{h}{3}  \, \frac{f[0,0.1,0.05]}{f[ 0,0.1  ]}         } \times \tilde E_2(f)=0.00436619\, .
$$
 Table \ref{tabtrap1} shows that the realistic error   $\bar E_{S_2}(f)$ becomes closer to the true error when one goes from the step $h$ to the step $h/2$.

  \begin{table}[h!]
   $$
   \begin{array}{ | c |  c |  c|  }
   \hline
   h& \bar E_{S_2}(f) & E_{S_2}(f)= I(f)-S_2(f)\\
   \hline
   0.1&0.00436619 &0.00527046\\
   \hline
   0.05&0.00154368&0.00186339\\
   \hline
   0.025&0.00054577&0.000658808\\
   \hline
      \end{array}
   $$
   \caption{\label{tabtrap1}  Realistic and true error for $S_2 (f)$. }
\end{table}

\end{exam}

\begin{prop}\label{propA4}
 Consider the model A for $n=3$,
\begin{equation}\label{p3Anova}
\begin{array}{ll}
S_3(f)&=Q_3(f)+\tilde E_3(f)\\
&=2\,h\, f(x_1)+\left\{ 2h^2   \, f[x_1,x_2]   + \displaystyle{\frac{2\, h^3}{3}} \, f[x_1,x_2,x_3]\right\},
\end{array}
\end{equation}
where $x_1=a$, $x_2=(a+b)/2$ and $x_3=b$. 
Let
\begin{equation}\label{p3AA}
g(x,h)=  \left|1+\displaystyle{  \frac{h}{6} \, \frac{f^{(2)}(x)}{f'(x)}  }\right|.
\end{equation}
Assuming that $f\in C^4[a,b]$, if 
\begin{equation}\label{p3A}
(i) \qquad  f'(x)\neq 0\qquad \forall x\in \,(x_1,x_3)\\
\end{equation}
and
\begin{equation}\label{p3B}
(ii) \qquad g(x,h)\geq h \qquad \forall x\in \, (x_1,x_3),
\end{equation}
then, for a sufficiently small  $h=(b-a)/2$,  $\tilde E_3(f)$ is a realistic correction for $Q_3(f)$. A realistic approximation to the true error of $S_3(f)$ is
\begin{equation}\label{eq2A4}
\bar E_{S_3} (f) =  -\displaystyle{ \frac{2\, h^3}{15}\, \frac{f[x_1,x_2,x_3,\bar x_1, \bar x_2]}{f[x_1,x_2]}} \times \tilde E_3 (f),
\end{equation}
where $\bar x_1=(x_1+x_2)/2$ and $\bar x_2=(x_2+x_3)/2$.
\end{prop}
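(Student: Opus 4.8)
The plan is to read Proposition \ref{propA4} as the instance $n=3$ of the already--proven Proposition \ref{prA}(a), since $3$ is odd, so that almost nothing new has to be argued: the only genuine task is to verify that the abstract constants and the auxiliary function appearing in \eqref{cond1A}--\eqref{pnA} take the explicit shapes stated in \eqref{p3AA} and \eqref{eq2A4}. First I would check the form of $g(x,h)$. For $n=3$ the sum $\sum_{j=2}^{n-1}$ in \eqref{cond1A} reduces to the single term $j=2$, so that
\[
g(x,h)=\left|\,1+\frac{a_3}{a_2}\,\frac{f^{(2)}(x)}{2!\,f'(x)}\,\right|.
\]
By Proposition \ref{pesosA} the weights are $a_2=I(w_1)=\int_0^{2h}t\,dt=2h^2$ and $a_3=I(w_2)=\int_0^{2h}t(t-h)\,dt=\tfrac{2}{3}h^3$, whence $a_3/a_2=h/3$ and the bracket collapses to $\bigl|1+\tfrac{h}{6}f^{(2)}(x)/f'(x)\bigr|$, which is exactly \eqref{p3AA}. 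This identification also shows that hypotheses (i)--(ii), i.e. \eqref{p3A}--\eqref{p3B}, are precisely \eqref{cond1}--\eqref{cond2} specialized to $n=3$, so Proposition \ref{prA}(a) applies verbatim and already delivers the conclusion that $\tilde E_3(f)$ is a realistic correction for $Q_3(f)$ once $h$ is small enough.

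It then remains only to evaluate the constant $I(w_{n+1})/I(w_1)=I(w_4)/I(w_1)$ that multiplies the divided--difference quotient in \eqref{pnA}. I would compute $I(w_1)=2h^2$ as above and, via the substitution $t=uh$,
\[
I(w_4)=\int_0^{2h}t(t-h)(t-2h)(t-3h)\,dt
      =h^5\int_0^{2}u(u-1)(u-2)(u-3)\,du=-\frac{4}{15}\,h^5,
\]
so that $I(w_4)/I(w_1)=-\tfrac{2}{15}h^3$. Inserting this value into \eqref{pnA}, together with the specializations $\bar x_1=(x_1+x_2)/2$ and $\bar x_2=(x_{n-1}+x_n)/2=(x_2+x_3)/2$, reproduces formula \eqref{eq2A4} exactly.

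Conceptually there is no real obstacle, since everything is inherited from Proposition \ref{prA}(a); the only point that demands care is the \emph{sign}. Because $w_4$ changes sign on $[0,2h]$, the integral $I(w_4)$ is negative, and it is precisely this negativity that produces the leading minus sign in \eqref{eq2A4}; getting the expansion of the quartic and its antiderivative right is the one place an error could creep in. If instead a self--contained derivation were preferred (mirroring the proof of Proposition \ref{propA3} rather than quoting the general result), I would start from $I(f)-S_3(f)=\tfrac{I(w_4)}{4!}f^{(4)}(\xi)$ furnished by Proposition \ref{pCA}(i), expand the correction as $\tilde E_3(f)=a_2f'(\theta)\bigl(1+\tfrac{a_3}{a_2}\tfrac{f^{(2)}(\theta_1)}{2!\,f'(\theta)}\bigr)$ using Proposition \ref{difderivA}, form the quotient $(I(f)-S_3(f))/\tilde E_3(f)$, bound it by means of \eqref{p3B} to obtain $|I(f)-S_3(f)|\le C\,h^2\,M\,|\tilde E_3(f)|$ with $M=\max_{x\in J}|f^{(4)}(x)/f'(x)|$ (hence realism for small $h$), and finally replace $f^{(4)}(\xi)/4!$ by $f[x_1,x_2,x_3,\bar x_1,\bar x_2]$ and $f'(\theta)$ by $f[x_1,x_2]$, again through Proposition \ref{difderivA} and continuity of $f^{(4)}$, to recover \eqref{eq2A4}.
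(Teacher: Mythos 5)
Your proposal is correct and follows essentially the same route as the paper: the paper's proof likewise just computes $I(w_4)=-4h^5/15$ and $I(w_1)=2h^2$ and invokes Proposition \ref{prA}(a). Your additional verification that $a_3/a_2=h/3$ recovers \eqref{p3AA}, and your computed ratio $I(w_4)/I(w_1)=-\tfrac{2}{15}h^3$, are both accurate.
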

 
\begin{proof}  As $I(w_4)=\int_0^{2h} w_4(t) dt=-4 h^5/15$ and $I(w_1)=\int_0^{2h} w_1(t) dt=2\, h^2$,  by  Proposition \ref{prA} (a) we obtain  \eqref{eq2A4}.
\end{proof}

\begin{exam}\label{exemplo2modeloA}  (A realistic  error  for  the  $S_3(f) $ rule)

\bigbreak
\noindent
Let $I(f)=\int_{0}^{2\, h} e^{-x^2}dx=-\displaystyle{  \frac{\sqrt{\pi}}{2}  } \, \mbox{erf}(2\,h)$,  with $h>0$. Since $ f'(x)=-2\,x\, e^{-x^2}\neq 0$,  the condition \eqref{p3A} holds with $x_1=0$ and $x_3=2 h$.
Consider
$$
g(x,h)=\left|  1+ \frac{h}{6}   \frac{f^{(2)}(x)}{f'(x)} \right| = \frac{1}{6}\,  \left| 6+ h \left( \frac{1}{x}-2 x  \right) \right|, \qquad 0<x<1.
$$
As $\lim_{x \rightarrow 0}\, g(x,h)=1$ and for any $0<x\leq 1$ we have $g(x,h)>h$, for $0<h\leq 1$ (see Figure \ref{figS3}),  thus the condition \eqref{p3B} is satisfied.  Notice that $g(x,h)$ gets closer to the value $1$ as $h$ decreases. Therefore one can assure that realistic estimates \eqref{eq2A4} can be computed to approximate the true error of $S_3(f)$ for a step $h\leq 1$. In Table \ref{tabS3} is displayed the estimated errors $\bar E_{S_3} (f)$ and the true error for $S_3(f)$, respectively for $h\,\in\, \{  1/2,1/4,1/8,1/16\}$. As expected, the computed values for  $\bar E_{S_3} (f)$ have the correct sign and closely agree with the true error.

   \begin{figure}[h]
\begin{center} 
 \includegraphics[totalheight=5.2cm]{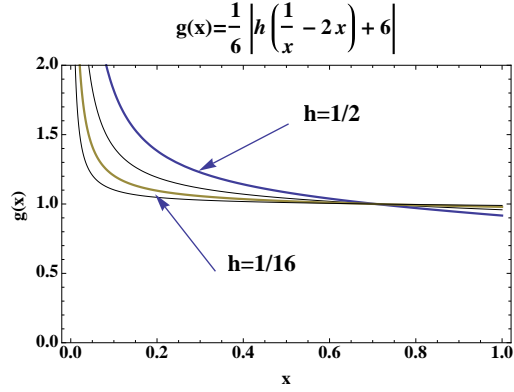}
\caption{\label{figS3} $g(x)>h$, for $h=1/2,1/4,1/8,1/16$.}
\end{center}
\end{figure} 
\noindent
  For $h=1/2$, we have
$$
\begin{array}{ll}
I(f)&= 0.746824 \\
Q_3(f)&= 2\, h\, f(0)=1\\
\tilde E_3(f)&= 2 h^2 f[0,1/2]+ \displaystyle{\frac{2\, h^3}{3}}  \, f[0,1/2,1]=     \\
& = -0.221199-0.03162046=-0.252850\\
S_3(f)&= Q_3(f)+\tilde E_3(f)=0.747180 .\\
\end{array}
$$
  
  \begin{table}[h!]
   $$
   \begin{array}{ | c |  c |  c|  }
   \hline
   h& \bar E_{S_3} (f)= \displaystyle{ \frac{-2\, h^3}{15} \frac{f[ x_1,x_2,x_3,\bar x_1, \bar x_2 ]}{f[x_1,x_2] } } \times \tilde E_3(f)& E_{S_3}(f)= I(f)-S_3(f)\\
   \hline
1/2&- 0.000396282&-0.000356296\\
   \hline
   1/4&- 0.000115228&-0.0000900798\\
   \hline
  1/8&- 4.92044\times 10^{-6}& -3.72994\times 10^{-6}\\
   \hline
1/16& -1.65494\times 10^{-7}& -1.24455\times 10^{-7}\\
   \hline
      \end{array}
   $$
   \caption{\label{tabS3}  Realistic and true error for $S_3 (f)$. }
\end{table}

\end{exam}

\begin{exam}\label{exemplo3modeloA}  (A realistic  error  for  the  $S_5(f) $ rule)

\bigbreak
\noindent
 From Table \ref{tabelamodeloA} we obtain the following expression for the rule $S_5(f)$,
  \begin{equation}\label{ruleS5}
  S_5(f)=4\, h\, f(x_1)+\left\{  8\, h^2 f[x_1,x_2] +\frac{40}{3}\, h^3  \, f[x_1,x_2,x_3]+ 16\, h^4\, f[x_1,\ldots,x_4] \right\}.
  \end{equation}
  Consider $I(f)=\int_0^{4 \,h} sin(2\,x) dx =sin^2(4\,h)$ and the interval $J=[a,b]=[0,\pi/5]$.  Since $f\in C^{6}(J)$ and $f'(x)=2\, \cos(2\,x)\neq 0,\,\, \forall \, x\in J$, the condition \eqref{cond1} holds with $x_1=0$ and $x_5=4\, h$. Let
$$
\begin{array}{ll}
g(x,h)&=  \left|   1+\displaystyle{ \frac{a_3\, f^{(2)} (x)}{a_2\, 2! \, f'(x)}     }     +\displaystyle{ \frac{a_4\, f^{(3)} (x)}{a_2\, 3! \, f'(x)}     }    +\displaystyle{ \frac{a_5\, f^{(4)} (x)}{a_2\, 4! \, f'(x)}     }         \right|, \qquad a< x <b     \\
&=  \left| 1 -\displaystyle{\frac{4\, h^2}{3}}+    \displaystyle{ \frac{1}{45} (14\, h^2-75)\, h\, \tan(2x)   }     \right|,
\end{array}
$$
where the coefficients $a_i$ are computed using \eqref{pesosparaA}.
It can be observed in the plot in the  Figure \ref{figS5} that for $h\in \{1/8,1/16,1/32,1/64\}$ the condition $g(x,h)>h$ is satisfied. Therefore,  since $n$ is odd,  one concludes from  Proposition \ref{prA} (a)  that the following realistic estimation for the true error of $S_5(f)$ is,
\begin{equation}\label{eqS5}
\begin{array}{ll}
\bar E_{S_5} (f)&= \displaystyle{  \frac{I(w_6)}{I(w_1)}  \frac{f[x_1,x_2,\ldots, x_5,\bar x_1, \bar x_2]}{f[ x_1,x_2 ]} }       \times \tilde E_5(f)\\
& = \displaystyle{  \frac{- 16\, h^5 }{21}    \frac{f[x_1,x_2,\ldots, x_5,\bar x_1, \bar x_2]}{f[ x_1,x_2 ]} }         \times \tilde E_5(f),
  \end{array}
\end{equation}
where $\bar x_1=(x_1+x_2)/2$ and $\bar x_2=(x_4+x_5)/2$. In Table \ref{tabS5} are displayed the computed realistic errors $\bar E_{S_5}(f)$ for the steps $h$ referred above.

   \begin{figure}[h]
\begin{center} 
 \includegraphics[totalheight=5.2cm]{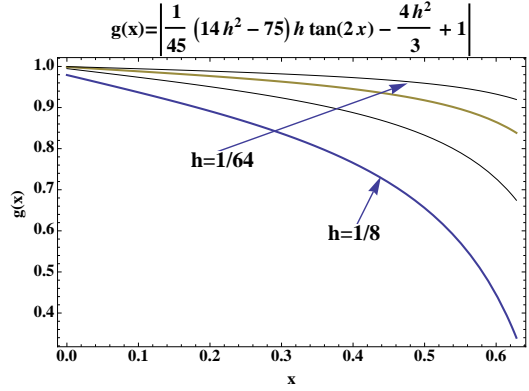}
\caption{\label{figS5} $g(x,h)>h$, for $h=1/8,1/16,1/32,1/64$.}
\end{center}
\end{figure} 
\noindent
  For instance for $h=1/8$, we have
$$
\hspace{-1cm}\begin{array}{ll}
I(f)&= I=0.2298488470659301 \\
Q_5(f)&= 4\, h\, f(x_1)=0\\
\\
\tilde E_5(f)&= 8\,  h^2 f[x_1,x_2]+\frac{40}{3} \, h^3 f[x_1,x_2,x_3]+ 16 h^4 f[x_1,\ldots,x_4]+\frac{112}{15} h^5 f[x_1,\ldots,x_5]=     \\
& = 0.2474039592545229  -0.01281864992070238   -\\
&-0.004808659341902015 +   0.00007207430695446034 \\
&= 0.2298487242988730,
\end{array}
$$

\noindent
and finally
$$
S_5(f)= Q_5(f)+\tilde E_5(f)=0.229848724298873. 
$$
  
  \begin{table}[h!]
   $$
   \begin{array}{ | c |  c |  c|  }
   \hline
   h& \bar E_{S_5} (f)\simeq \displaystyle{ \frac{-16 \, h^5}{21} \frac{f[ x_1,x_2,x_3,x_4,x_5,\bar x_1, \bar x_2 ]}{f[x_1,x_2] } } \times \tilde E_5(f)& E_{S_5}(f)= I(f)-S_5f)\\
   \hline
1/8&1.14143\times 10^{-7}& 1.22767\times 10^{-7}\\
   \hline
   1/16& 4.89318\times 10^{-10}& 4.98246\times 10^{-10}\\
   \hline
  1/32& 1.95599\times 10^{-12}& 1.96484\times 10^{-12}  \\
   \hline
1/64& 7.68478\times 10^{-15} & 7.69335\times 10^{-15}\\
   \hline
      \end{array}
   $$
   \caption{\label{tabS5}  Realistic and true error for $S_5 (f)$. }
\end{table}
\end{exam}

\section{Composite  rules}\label{seccompositeA}
The  rules whose  weights  have been  given in  Table \ref{tabelamodeloA} are here applied  in order to obtain the so\--called {\it composite} rules.  The algorithm described hereafter for composite rules is illustrated by several examples  presented in paragraph \ref{subsecexemplosA}. Since the best rules $S_n$ are the ones for which $n$ is even (when   $deg(S_n)=n$ holds)  the examples refer to $S_3$, $S_5$,  $S_7$ and $S_9$. Whenever the conditions of Proposition \ref{prA} for obtaining realistic errors are satisfied, these rules enable the computation of high precision approximations to the integral $I(f)$,   as well  as good approximations to the true error. This justifies the name {\it realistic} error adopted in this work. 

\bigbreak
\noindent
Let $n\geq 2$ be given and fix a natural number $i$. Consider the number $N=(n-1)\times i$ and divide the interval $[a,b]$ into $N$ equal parts of length $h=(b-a)/N$, denoting by  $x_1,x_2,\ldots, x_N$ the nodes, with $x_1=a$, $x_N=b$, and  $x_j=x_{j-1}+h,\quad j=2,3,\ldots (N-1)$. Partitioning the set $\{x_1,x_2,\ldots, x_N\}$ into subsets of $n$ points each,  and  for an offset of $n-1$ points, we get $i$ panels each one containing  $n$ successive nodes. To each panel we apply in succession the rule $Q_n$ and compute the respective realistic correction $\tilde E_n$ as well as the estimated realistic error $\bar E_{S_n}$ for the rule $S_n(f)$.  For the output we compute the sum of the partial results obtained for each panel as described in \eqref{panels}: 
\begin{equation}\label{panels}
\left\{
\begin{array}{l}
Q=\sum_{k=1}^{i} Q_n(\mbox{panel}_k)\\
\tilde E= \sum_{k=1}^{i} \tilde E_n(\mbox{panel}_k)\\
S=Q+\tilde E\quad \mbox{(composite rule)}\\
\bar E= \sum_{k=1}^{i} \bar E_n(\mbox{panel}_k)\quad \mbox{(realistic error for $S$)}.
\end{array}
\right.
\end{equation}
According to Proposition \ref{pCA}, 
 for composite rules with $n$ points by panel and step $h>0$, in the favorable cases (those satisfying the hypotheses  behind  the theory) one can expect to be able to compute approximations $S$ having a realistic error. Analytic proofs for  realistic errors in composite rules for both models $A$ and $B$ will be treated in a forthcoming work \cite{mg1}.
\subsection{Numerical examples for composite rules}\label{subsecexemplosA}

\noindent
Once computed realistic errors within each panel for a composite rule, we can expect the error $\bar E$ in \eqref{panels} to be  also realistic. This happens in all the numerical examples worked below.
%
\begin{exam}\label{exemplo1A}
\end{exam} 

\noindent
Let 
$$
\begin{array}{ll}
I(f)&=\int_{10^5}^{2\times 10^5}\displaystyle{ \frac{1}{\ln(x)}    } dx\qquad \mbox{(\cite{steffensen}, p. 161   )}\\
&= li(2\times 10^5)-li(10^5) 
\end{array}
$$
In the interval $J=[10^5, 2\times 10^5]$,  the function $f(x)=1/\ln(x)$ belongs to the class $C^{\infty} (J)$. Since $f'(x)= -( x\, \ln^2(x))^{-1}\neq 0$, for all $x\in J$, and for $k\geq 2$ the quotients $f^{(k)} (x)  / f'(x)$,  with $x\in J$, are close to $\gamma (x)=0$ and tends to the zero function  $\gamma(x)$ as $k$ increases. Therefore, the lefthand side in the inequality   \eqref{cond2} is very close to $1$. That is, for $n\geq 2$ the function
$$
g(x, h)= \left|1+    \sum_{j=2}^{n-1} \displaystyle{ \frac{a_{j+1}}{a_2}  }    \,   \displaystyle{  \frac{f^{(j)}(x)}{j!\, f'(x)}} \right| 
$$
is such that $g(x,h)\simeq 1$, for $h$ sufficiently small. So Proposition \ref{prA} holds and  one obtains  realistic errors  for the rules $S_n(f)$.

\noindent
The behavior of the function $g(x,h)$ is illustrated for  the case $n=3$ in Figure \ref{figS3logint},   for $h\,\in \{5, 5/2, 5/3   \}$. Note that in this example $g(x)\simeq 1$ while the chosen steps $h$ are greater than $1$. However, realistic errors are still obtained.

   \begin{figure}[h]
\begin{center} 
 \includegraphics[totalheight=5.2cm]{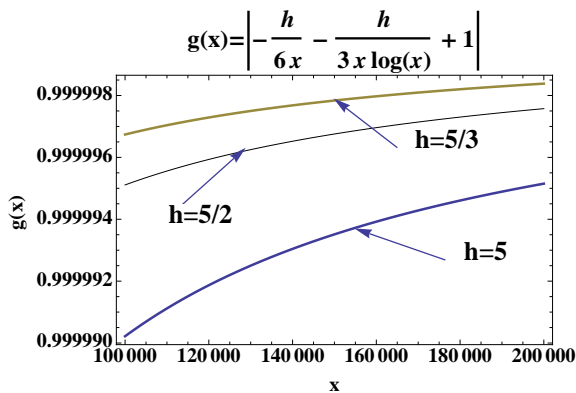}
\caption{\label{figS3logint}  $n=3$.  $g(x)\simeq 1$, for $h=5,5/2,5/3$.}
\end{center}
\end{figure} 

\noindent
 Using a precison of  $32$  decimal digits (or greater for $n>3$) the following values are obtained for the $3$\--point composite rule:
 $$
 \begin{array}{ll}
I&=8406.2431208462027086216460436947\\
h&=5\\
Q&=8406.2677835091928175\\
E&= -0.024662837510842808609  +1.7452073401705812569\times 10^{-7}=  \\
&=      -0.024662662990108791550\\
S&=8406.2431208462027087\\
\bar E_S&=-5.9854000\times 10^{-17}   \quad \mbox{ (realistic error for $S$)   }\\
I-S&=-5.9854472*10^{-17}\quad \mbox{(true error)}.
 \end{array}
 $$
 \noindent
In Table \ref{tabexemplo3b} the realistic error is compared with  the true error, respectively for the rules with $n$ odd from  $3$ to $9$ points  (the step $h$  is as tabulated).

\begin{table}[h]
\hspace{-1cm}$$\begin{array}{|c|c|c| c|}\hline
n&h&\bar E_S&I-S\\
\hline
3&5&-5.98540\times 10^{-17}&-5.98545\times 10^{-17}\\
\hline
3&5/3&-7.38942\times 10^{-19}&-7.38944\times 10^{-19}\\
\hline
5&5/2&-1.30573\times 10^{-26}&-1.30576\times 10^{-26}\\
\hline
5&5/6&-1.79116\times 10^{-29}&-1.79117\times 10^{-29}\\
\hline
7&5/3&-5.31897\times 10^{-36}&-5.31911\times 10^{-36}\\
\hline
7&5/6&-2.07775\times 10^{-38}&-2.07778\times 10^{-38}\\
\hline
9&25/6&-4.95560\times 10^{-40}&-4.95608\times 10^{-40}\\
\hline
9&5/2&-2.99658\times 10^{-42}&-2.99675\times 10^{-42}\\
\hline
  \end{array}$$
\caption{\label{tabexemplo3b}  Comparison of the realistic error $\bar E_S$ with the true error. }
\end{table}

\noindent
For $n=7$, the computed approximation for the integral $I$ is
$$
S=8406.24312084620270862164604369467068,
$$
where all the digits are correct. The simple  rule $S_7(f)$ is defined (see Table \ref{tabelamodeloA}) as
\begin{equation}\label{ruleS7}
\begin{array}{ll}
S_7(f)&= 6\, h\,f(x_1)+\left\{  18 h^2 f[x_1,x_2]+ 54 h^3 f[x_1, x_2,x_3]+144 h^4 f[x_1,\ldots,x_4]+ \right.\\
&+ \left.  \frac{1476}{5} h^5 f[x_1,\ldots,x_5]+ 396 h^6 f[x_1,\ldots, x_6]+\frac{1476}{5} f[x_1,\ldots, x_7]    \right\}.
\end{array}
\end{equation}
The respective realistic error is (see \eqref{pnA})
\begin{equation}\label{realS7}
\bar E_{S_7}= \frac{I(w_8)}{i(w_1)}\frac{d_8}{d_1}\times \tilde E_7= - \frac{72}{5}\, h^7   \frac{d_8}{d_1}\times \tilde E_7.
\end{equation}
In Appendix \ref{apendiceA} a  {\sl Mathematica} code for the composite rule $S$, for $n=7$,  is given. The respective procedure is called $Q7A$ and the code includes comments explaining the respective algorithm. Of course we could have adopted a more efficient  programming style, but our goal here is simply to illustrate the algorithm described above for the composite rules.


%
%
 
\begin{apen}\label{apendiceA}   (Composite rule $S$ for $n=7$ points )

$$
\begin{array}{l}
\mbox{(*   For the data } \,\, \{ x_1,\ldots, x_n   \}, \,  \{ f_1,\ldots, f_n   \},\,\, \mbox{and}\,\, k\geq0 \\
\mbox{ the output  for $d[x,y,k,p]$ is the divide difference}\, f[x_1,\ldots, x_k]  \,\, \mbox{*)}\\
\mbox{(* The user may enter a precision $p$ which will be assigned to the data.   *)}\\
\mbox{(*  By default $p=\infty$  *)}\\
\end{array}
$$
$$
\begin{array}{l}
d[xi\verb+_+List, yi\verb+_+List, k\verb+_+ /; k >= 0, precision\verb+_+: Infinity] :=\\
  
  Module[\{n = Length[xi],  x, y, dd\},\\
 \hspace{3cm}   \mbox{(*  set default precision to nodes xi  *)}\\
   x = SetPrecision[xi, precision];  \\
   \hspace{3cm}   \mbox{ (*    set default precision to functional values yi *)}\\
   y = SetPrecision[yi, precision];  \\
 \mbox{  (*   recursive definiton for finite differences :   *)  }\\
   \end{array}
   $$
   $$
   \begin{array}{l}
   dd[0, j\verb+_+] := y[[j]];   \mbox{ (* order 0 difference  *) }\\
 \mbox{    (*   ordem i difference; dynamic computation  *)  }\\
   
   dd[i\verb+_+, j\verb+_+] :=   dd[i, j] = (dd[i - 1, j + 1] - dd[i - 1, j])/(x[[i + j]] - x[[j]]);\\
  \mbox{  (* Output:  first difference of order k *)  } \\
   dd[k, 1]\quad  ];\\
   \end{array}
$$

$$
\begin{array}{l}
\mbox{(*   The procedure $q7A$ uses the algorithm for the simple rule with  n=7} \\
\hspace{5cm} \mbox{ nodes  *) }     \\
\mbox{(* The output is a list containing the relevant items for this simple rule *)}\\
\\
q7A[\{\{t1\verb+_+, f1\verb+_+\}, \{t2\verb+_+, f2\verb+_+\}, \{t3\verb+_+, f3\verb+_+\}, \{t4\verb+_+, f4\verb+_+\}, \{t5\verb+_+, f5\verb+_+\}, \{t6\verb+_+,  f6\verb+_+\},\\
\hspace{5cm}  \{t7\verb+_+, f7\verb+_+\}\}, precision\verb+_+: Infinity] := \\
 Module[\{ x1, x2, x3, x4, x5, x6 , x7, xb1, xb2, y1, y2, y3, y4, y5,   y6, y7,   \\
 \hspace{3cm} yext, hh, ext, d1, d8, q, e1, e2, e3, e4, e5, e6, E7, s,  real\},\\
 \end{array}
 $$
 $$
 \begin{array}{l}
  \{y1, y2, y3, y4, y5, y6, y7\} =  SetPrecision[\{f1, f2, f3, f4, f5, f6, f7 \}, \\
  \hspace{5cm} precision];\\
  \{x1, x2, x3, x4, x5, x6, x7\} =   SetPrecision[\{t1, t2, t3, t4, t5, t6, t7 \},\\
  \hspace{5cm} precision] ;\\
  xb1 = (x1 + x2)/2; xb2 = (x6 + x7)/2;\\
  ext = \{x1, x2, x3, x4, x5, x6, x7, xb1, xb2\}; \\
  yext = Map[f, ext]; \quad \mbox{(*   completation of the panel *) }\\
  d1 = d[\{x1, x2\}, \{y1, y2\}, 1];  \quad \mbox{(*  divided difference order 1      *)}\\
  \end{array}
  $$
  $$
  \begin{array}{l}
  d8 = d[ext, yext, 8, precision]; \quad \mbox{(*  divided difference order 8      *)}\\
    hh = SetPrecision[h, precision]; \quad  \mbox{ (* step  *)} \\
  q = 6*  hh*y1; \quad \mbox{ (*  left rectangle quadrature  rule *)  }\\
  e1 = 18* hh^2*d[\{x1, x2\}, \{y1, y2\}, 1]; \quad \mbox{  (* error e1 *)}\\
  e2 = 54* hh^3 *d[\{x1, x2, x3\}, \{y1, y2, y3\}, 2]; \quad \mbox{ (* error e2 *) }\\
  \hspace{5cm}  \mbox{ (* error e3  :   *) }\\
  e3 = 144* hh^4*d[\{x1, x2, x3, x4\}, \{y1, y2, y3, y4\}, 3];  \\
   \hspace{5cm}  \mbox{ (* error e4  :   *) }\\
   e4 = 1476/5* hh^5*   d[\{x1, x2, x3, x4, x5\}, \{y1, y2, y3, y4, y5\}, 4];  \\
       \hspace{5cm}  \mbox{ (* error e5  :   *) }\\
 e5 = 396* hh^6* d[\{x1, x2, x3, x4, x5, x6\}, \{y1, y2, y3, y4, y5, y6\}, 5]; \\
  \hspace{5cm}  \mbox{ (* error e6  :   *) }\\
 e6 = 1476* hh^7/7*  d[\{x1, x2, x3, x4, x5, x6, x7\},\\
 \hspace{5cm}  \{y1, y2, y3, y4, y5, y6, y7\},  6]; \\
  E7 = e1 + e2 + e3 + e4 + e5 + e6; \quad \mbox{  (*   realistic error for q  *)}\\
  s = q + E7;\quad \mbox{ (* s is a realistic approximation to the integral  *) }\\
  \end{array}
  $$
  $$
  \begin{array}{l}
  real = N[-72* hh^7/5 * d8/d1*E7,  8]; \quad \mbox{ (* realistic error for s  *) }\\
  \{q, e1, e2, e3, e4, e5, e6, E7, s, real\}]; \quad \mbox{ (* output  *)}
  \end{array}
  $$
  $$
  \begin{array}{l}
  \mbox{(*   The following procedure $Q7A$ gives a list containing the relevant }\\ 
  \hspace{2cm} \mbox{ items for the composite rule. It  calls the procedure $q7A$:     *)}  \\
  \\
  Q7A[x\verb+_+List, y\verb+_+List, precision\verb+_+: Infinity] := Module[\{data, list\},\\
  \hspace{2cm}  \mbox{ (*  partition into 7 nodes offset 6  *) }\\
   data = Partition[Transpose[\{x, y\}], 7, 6] ; \\
   \hspace{2.5cm} \quad \mbox{ (* sum entries in cells and prepend the step  h used  *)}\\
 list = Map[q7A[\verb+#+, precision] \verb+&+, data]; \\
    Prepend[  Map[Apply[Plus,\verb+ #+] \verb+&+, Transpose[list]], Rationalize[h]] \,\,  ];\\
  \end{array}
  $$
 
\end{apen}

\noindent
{\bf Acknowledgments}

\noindent
This work  has been supported by  Instituto de Mecânica\--IDMEC/IST, Centro de Projecto Mecânico, through FCT (Portugal)/program POCTI.

\noindent




\begin{thebibliography}{99}



\bibitem{davis} {P.~J. Davis and P.~Rabinowitz}, \textit{Methods of Numerical Integration},  Academic Press, Orlando, 1984. 
\bibitem{gautschi} {W.  Gautschi}, \textit{Numerical Analysis, An Introduction},  Birkhauser, Boston, 1997. 
\bibitem{mg1}{M.~M. Graça}, \textit{Realistic errors for corrected open Newton\--Cotes rules}, \textit{In preparation}.
\bibitem{isaacson} {E.  Isaacson and  H.~B. Keller}, \textit{Analysis of numerical methods},  John Wiley \& Sons, New York, 1966.
\bibitem{krylov}{ V.~I.~Krylov}, \textit{Approximate Calculation of Integrals},   Dover, New York, 2005.

 

\bibitem{steffensen}{J.~F. Steffensen}, \textit{Interpolation},  Dover, 2nd Ed.,  Boston, 2006. 


 
\end{thebibliography}
\end{document}